\documentclass{amsart}
\usepackage{amsfonts,amssymb,amscd,amsmath,enumerate,verbatim,calc}
\usepackage{xy}

\newcommand{\CM}{Cohen-Macaulay}

\newcommand{\n}{\mathfrak{n} }
\newcommand{\m}{\mathfrak{m} }
\newcommand{\M}{\mathfrak{M} }

\newcommand{\q}{\mathfrak{q} }

\newcommand{\Z}{\mathbb{Z} }

\newcommand{\rt}{\rightarrow}

\newcommand{\ov}{\overline}

\newcommand{\image}{\operatorname{image}}

\newcommand{\depth}{\operatorname{depth}}
\newcommand{\type}{\operatorname{type}}
\newcommand{\Ass}{\operatorname{Ass}}

\newcommand{\Ext}{\operatorname{Ext}}

\theoremstyle{plain}

\newtheorem{thm}{Theorem}[section]

\newtheorem{theorem}{Theorem}[section]

\newtheorem{lemma}[theorem]{Lemma}
\newtheorem{proposition}[theorem]{Proposition}

\theoremstyle{definition}

\newtheorem{remark}[theorem]{Remark}
\newtheorem{example}[theorem]{Example}

\theoremstyle{remark}

\begin{document}
\title[$e_2 = e_1 - e  + 1$]{Cohen-Macaulay local rings with $e_2 = e_1 - e + 1$}
\author{Ankit Mishra}
\email{ankitmishra@math.iitb.ac.in}

\author{ Tony~J.~Puthenpurakal}
\email{tputhen@math.iitb.ac.in}

\address{Department of Mathematics, IIT Bombay, Powai, Mumbai 400 076}

\date{\today}

\subjclass{Primary  13A30,  13D45 ; Secondary 13H10, 13H15}
\keywords{multiplicity, blow-up algebra's, Ratliff-Rush filtration, Hilbert functions}

\begin{abstract}
In this paper we study \CM \ local rings  of dimension $d$, multiplicity $e$ and second Hilbert coefficient $e_2$ in the case $e_2 = e_1  - e  + 1$. Let $h = \mu(\mathfrak{m}) - d$. If $e_2 \neq 0$ then in our case we can prove that $\type A \geq e - h -1$. If $\type A = e - h -1$ then we show that the associated graded ring $G(A)$ is \CM. In the next case when $\type A = e - h$  we determine all possible Hilbert series of $A$. In this case we show that the Hilbert Series of $A$  completely determines $\depth G(A)$.
\end{abstract}

\maketitle	
\section{Introduction}

Let $(A,\mathfrak{m})$ be a Cohen-Macaulay local ring with maximal ideal $\mathfrak{m}$ and dimension $d$. Let $k=A/\mathfrak{m}$ be its residue field.  Let $G(A) = \bigoplus_{i\geq0}{\mathfrak{m}^i/\mathfrak{m}^{i+1}}$ be the associated graded ring of $A$ with respect to maximal ideal $\mathfrak{m}$. If $M$ is an $A$-module then let $\ell(M)$ denotes its length and $\mu(M)$ the number of its minimal generators. The Hilbert series of $A$ is $$H_A(z)=\sum_{i\geq0}\ell({\mathfrak{m}^i}/{\mathfrak{m}^{i+1}})z^i=\frac{h_A(z)}{(1-z)^d}$$
Where $h_A(z)=h_0+h_1z+\ldots+h_sz^s\in \mathbb{Z}[z]$ is known as $h$-polynomial of $A$. Here $h_A(1)\neq0$ , $h_0=1$ and $h=$ embedding codimension of $ A $= $ \mu(\mathfrak{m})-d$,

We define another numerical function $H^{(1)}(A,i)=\ell(A/{\mathfrak{m}^{i+1}})$, this function is known as Hilbert-Samuel function of $A$ with respect to $\mathfrak{m}$. The generating function of this function
$$H_A^{(1)}(z)=\sum_{i\geq0}H^{(1)}(A,i)z^i=H_A(z)/{(1-z)}$$
   There exists a polynomial $P_A(z)$ of degree $d$ such that $P_A(i)=H^{(1)}(A,i)$ for all $i\gg0$ and we know that $$P_A(z)=\sum_{i=0}^d (-1)^ie_i(A)\binom{z+d-i}{d-i}$$ The integers $e_i(A)$ are called the $i$-th Hilbert coefficient of $A$. If $g$ is a polynomial we use $g^{(i)}$ to denote $i$-th formal derivative of $g.$ It is easy to see that $e_i=h_A^{(i)}(1)/{i!}$ for  $0 \leq i\leq d.$ It is also convenient to set $e_i=h_A^{(i)}(1)/{i!}$ for all $i\geq 0$. Here $e_0$ is known as multiplicity of $A$.

In \cite{It} Itoh proved that $e_2\geq e_1-e+1$. In \cite[6.20]{V} it was conjectured that if $e_2=e_1-e+1$ then $s\leq2$ and $G(A)$ is Cohen-Macaulay. This conjecture is true for $d=0,1$ but for $d=2$ there is a counter-example given by Wang in \cite[3.10]{CPR}.

We define type of $A$ as length of $\Ext^d_A(k,A)$ where $k=A/\mathfrak{m}.$

If $A$ is Gorenstein ring with $e_2=e_1-e+1$ then $G(A)$ is Gorenstein. Infact, in this case $e=h+2$ so from \cite[Theorem 3.4]{S}, $G(A)$ is Gorenstein ring. Gorenstein rings are Cohen-Macaulay of type one. So it motivated us to study the   behaviour of $G(A)$ by applying some condition on the type of $A$.

If $(A,\mathfrak{m})$ is Cohen-Macaulay local ring  with $e_2=e_1-e+1 \neq0$, then it is easy to prove type$(A)\geq e-h-1$, see \ref{greater}.

We will first consider the case when type$(A)=e-h-1$ and prove,
\begin{thm}

	Let $(A,\mathfrak{m},k)$ be a Cohen-Macaulay local ring with dimension $d$ and suppose $e_2=e_1-e+1\neq0$.
	If type$(A)=e-h-1$, then $G(A)$ is Cohen-Macaulay.
\end{thm}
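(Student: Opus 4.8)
The plan is to argue by induction on $d=\dim A$, after reducing the Cohen--Macaulayness of $G(A)$ to a statement purely about $\depth G(A)$ together with a one-dimensional computation. First I would make the standard harmless reductions: passing to $A[X]_{\m A[X]}$ we may assume $k$ infinite, which changes none of $e,e_1,e_2,h,\type A$ or $\depth G(A)$. With $k$ infinite choose a minimal reduction $J=(x_1,\dots,x_d)$ of $\m$ generated by a superficial sequence; since $A$ is \CM\ this is a regular sequence, so $B:=A/J$ is Artinian with $\ell(B)=e$, $\mu(\m_B)=h$, and $\type A=\dim_k\soc(B)$. In particular $\ell(\m_B^2)=e-h-1$. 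The criterion of Valabrega--Valla says that $G(A)$ is \CM\ precisely when $J^{\ast}=(x_1^{\ast},\dots,x_d^{\ast})$ is a regular sequence in $G(A)$, equivalently when $J\cap\m^{n+1}=J\m^n$ for all $n$, equivalently when $H_B(z)=h_A(z)$.

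The base case $d=1$ needs no hypothesis on the type. For a minimal reduction $(x)$ put $b_n:=\ell(\m^{n+1}/x\m^n)=e-\ell(\m^n/\m^{n+1})$; a direct manipulation of the Hilbert series gives $e_1=\sum_{n\ge 0}b_n$ and $e_2=\sum_{n\ge 0}n\,b_n$, whence $e_2-(e_1-e+1)=\sum_{n\ge 2}(n-1)b_n$. Thus $e_2=e_1-e+1$ forces $b_n=0$, i.e.\ $\m^{n+1}=x\m^n$, for all $n\ge 2$. Since $x\notin\m^2$ one checks directly that $(x)\cap\m^2=x\m$, and combining these gives $(x)\cap\m^{n+1}=x\m^n$ for every $n$, so $x^{\ast}$ is a nonzerodivisor and $G(A)$ is \CM.

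For the inductive step $(d\ge 2)$ the problem collapses to a single point: it is enough to prove that $\depth G(A)\ge 1$. Indeed, given $\depth G(A)\ge 1$ I may pick a superficial $x$ whose initial form $x^{\ast}$ is a nonzerodivisor on $G(A)$; then $G(A/xA)=G(A)/x^{\ast}G(A)$, so the whole Hilbert series is preserved and $e_i(A/xA)=e_i(A)$ for all $i$. Hence $A/xA$ is \CM\ of dimension $d-1$ with $e_2=e_1-e+1\neq 0$ and $\type(A/xA)=\type A=e-h-1$, so by the inductive hypothesis $G(A/xA)$ is \CM; as $x^{\ast}$ is a nonzerodivisor, $G(A)$ is \CM.

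It remains to establish $\depth G(A)\ge 1$, and this is the heart of the argument and the step I expect to be the main obstacle. The natural translation is via the Ratliff--Rush filtration, equivalently via $H^0_{\M}(G(A))$ with $\M=G(A)_+$: one has $\depth G(A)\ge 1$ if and only if $\widetilde{\m^{\,n}}=\m^n$ for all $n$, if and only if $H^0_{\M}(G(A))=0$. The plan is to show that any strict inclusion $\m^n\subsetneq\widetilde{\m^{\,n}}$ produces socle elements of the Artinian reduction $B=A/J$ lying outside $\m_B^2$, thereby forcing $\type A=\dim_k\soc(B)$ strictly above the minimal value $e-h-1$ furnished by \ref{greater}, contradicting the hypothesis. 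Making this quantitative is exactly where the difficulty lies, since it requires comparing the Cohen--Macaulay type of $A$ — the socle of $H^d_{\m}(A)$, via local duality — with the (non)vanishing of $H^0_{\M}(G(A))$. I would carry this out through the Sancho de Salas exact sequence relating the local cohomologies of $A$, of $G(A)$, and of the Rees algebra, extracting from a nonzero $H^0_{\M}(G(A))$ an extra contribution to $\soc(B)$ beyond the $e-h-1$ accounted for by $\m_B^2$; the equality $\type A=e-h-1$ then kills that contribution and delivers $\depth G(A)\ge 1$.
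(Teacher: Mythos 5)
Your reductions are fine as far as they go: the base case $d=1$ is correct (it is essentially the paper's own one-dimensional argument), and the observation that for $d\ge 2$ everything follows once $\depth G(A)\ge 1$ is also correct, since a superficial $x$ whose initial form is a $G(A)$-nonzerodivisor preserves the whole $h$-polynomial (hence $e_1,e_2$), the embedding codimension, and the type. But the proposal stops exactly at the theorem's actual content: you never prove $\depth G(A)\ge 1$; you only announce a plan (``extract extra socle elements of $A/J$ from $H^0_{\M}(G(A))\neq 0$, via the Sancho de Salas sequence'') and yourself flag that making it quantitative is the difficulty. That is a genuine gap, not a routine verification. Two warning signs that the gap is real: first, in your outline the hypothesis $e_2=e_1-e+1$ enters only in the base case, but it must enter the depth argument itself --- even the inequality $\type A\ge e-h-1$ (Proposition \ref{greater}) fails without it, so no implication of the form ``$\widetilde{\m^n}\neq \m^n$ forces socle elements outside the square of the maximal ideal of $A/J$'' can hold unconditionally. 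Second, Wang's example (Section 5) satisfies $e_2=e_1-e+1\neq 0$, $\depth G(A)=0$ and $\type A=e-h$; so the socle excess you must produce is exactly one dimension, and any successful argument has to be sharp enough to separate $e-h-1$ from $e-h$ --- soft local-cohomology bookkeeping cannot do that without precise length counts.

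For comparison, the paper proves precisely this step as follows. It first reduces to $d=2$ by cutting $d-2$ superficial elements (this preserves $e_0,e_1,e_2$ and the type with no depth hypothesis, and Sally descent recovers the general case), and then uses three specific inputs: (i) Itoh's theorem (Lemma \ref{sigma}), which with $e_2=e_1-e+1$ forces $\sigma_n=0$, i.e. $\widetilde{\m^{n+1}}=J\widetilde{\m^n}$, for all $n\ge 2$; (ii) the result $\ov{\widetilde{\m^3}}=\widetilde{\n^3}$ of \cite[Prop.~4.4]{Pu2}, which transfers this modulo a superficial element and yields $\q^2\subseteq\ov{\widetilde{\n^2}}\subseteq\soc(C)$ for the Artinian reduction $(C,\q)=(A/J,\m/J)$; and (iii) the length count $\ell(\q^2)=e-h-1=\dim_k\soc(C)$ --- this is where $\type A=e-h-1$ is used --- which forces $\q^2=\ov{\widetilde{\n^2}}$, whence the injections of \ref{incl} give $\widetilde{\n^2}=\n^2$, then $\widetilde{\m^2}=\m^2$, then $\widetilde{\m^n}=\m^n$ for all $n$ by (i), i.e. $\depth G(A)>0$ (and in fact $=2$ by Lemma \ref{dep02}). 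Your proposal would need to supply an argument of this strength; as written it does not.
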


Next we will consider the case when type$(A)= e-h$, in this case we will prove that if $\depth G(A) = 0$ then $\widetilde{G}(A)$ is Cohen-Macaulay, where \\ $\widetilde{G}(A)=\bigoplus_{n \geq 0}{\widetilde{\mathfrak{m}^n}}/{\widetilde{\mathfrak{m}^{n+1}}}$ is associated graded ring of $A$ with respect to Ratliff-Rush filtration $\{\widetilde{\mathfrak{m}^n}\}_{n\geq0}$. We prove

\begin{thm}
	Let $(A,\mathfrak{m})$ be Cohen-Macaulay local ring of dimension $d\geq2$ with type$(A)=e-h$ and $e_2=e_1-e+1\neq0$. Suppose $x_1,\ldots,x_d$ is $A$-superficial sequence. Set $J=(x_1,\ldots,x_d)$. If depth $G(A)=0$ then
	\begin{enumerate}[\rm (1)]
		\item $\widetilde{\mathfrak{m}^2} \neq \mathfrak{m}^2$ and  $\ell({\widetilde{\mathfrak{m}^2}}/{\mathfrak{m}^2})=1$.
		\item $\widetilde{G}(A)$ is Cohen-Macaulay.
		\item  $\widetilde{\mathfrak{m}^{n+1}}=J\widetilde{\mathfrak{m}^n}$ for all $n\geq2$.
		\item $\widetilde{\mathfrak{m}^j}=\mathfrak{m}^j$ for all $j\geq3$ and $\ell({\mathfrak{m}^3/{J\mathfrak{m}^2}})=d$.
        \item $G(A)$ is generalized \CM.
	\end{enumerate}
\end{thm}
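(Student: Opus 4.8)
The target is the statement that $H^i_{\mathfrak{M}}(G(A))$ has finite length for every $i < d = \dim G(A)$, where $\mathfrak{M}$ is the homogeneous maximal ideal of $G(A)$; this is exactly what \GCM \ means. My plan is to exploit parts (1)--(4), which say that the Ratliff--Rush and $\mathfrak{m}$-adic filtrations differ only in degree $2$, and by exactly one unit of length. Since $\widetilde{\mathfrak{m}} = \mathfrak{m}$ automatically (if $\widetilde{\mathfrak{m}} \supsetneq \mathfrak{m}$ then $\widetilde{\mathfrak{m}} = A$, forcing $\mathfrak{m}^n = \mathfrak{m}^{n+1}$ for some $n$, contradicting $d \geq 1$) and $\widetilde{\mathfrak{m}^j} = \mathfrak{m}^j$ for $j \geq 3$ by (4), the two filtrations agree in every degree except $n = 2$, where $\ell(\widetilde{\mathfrak{m}^2}/\mathfrak{m}^2) = 1$. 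The idea is to bridge $G(A)$ and $\widetilde{G}(A)$ by a single intermediate graded $G(A)$-module and push everything through the long exact sequence of local cohomology, using that $\widetilde{G}(A)$ is \CM \ by part (2).

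Concretely, I would introduce the bridge module $B = \bigoplus_{n \geq 0} \widetilde{\mathfrak{m}^n}/\mathfrak{m}^{n+1}$ together with the finite-length module $L = \bigoplus_{n \geq 0} \widetilde{\mathfrak{m}^n}/\mathfrak{m}^n$, which by the above is isomorphic to $k$ placed in degree $2$. Because $\mathfrak{m}^i \widetilde{\mathfrak{m}^n} \subseteq \widetilde{\mathfrak{m}^{i+n}}$, all three objects are graded $G(A)$-modules, and the natural degreewise inclusions and surjections assemble into two short exact sequences of graded $G(A)$-modules,
\[
0 \to G(A) \to B \to L \to 0, \qquad 0 \to L(1) \to B \to \widetilde{G}(A) \to 0,
\]
where $L(1)$ is $L$ shifted, concentrated in degree $1$ with length $1$. (The first comes from $0 \to \mathfrak{m}^n/\mathfrak{m}^{n+1} \to \widetilde{\mathfrak{m}^n}/\mathfrak{m}^{n+1} \to \widetilde{\mathfrak{m}^n}/\mathfrak{m}^n \to 0$; the second from $0 \to \widetilde{\mathfrak{m}^{n+1}}/\mathfrak{m}^{n+1} \to \widetilde{\mathfrak{m}^n}/\mathfrak{m}^{n+1} \to \widetilde{\mathfrak{m}^n}/\widetilde{\mathfrak{m}^{n+1}} \to 0$.) Since $L$ and $L(1)$ have finite length, $H^0_{\mathfrak{M}}(L) = L$, $H^0_{\mathfrak{M}}(L(1)) = L(1)$, and all higher local cohomology of $L$ and $L(1)$ vanishes.

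Now I would chase the two long exact sequences. From the second one, using that $\widetilde{G}(A)$ is \CM \ of dimension $d \geq 2$ (so $H^i_{\mathfrak{M}}(\widetilde{G}(A)) = 0$ for all $i < d$), the vanishing of $H^{\geq 1}_{\mathfrak{M}}(L(1))$ gives $H^i_{\mathfrak{M}}(B) = 0$ for $1 \leq i \leq d-1$ and $H^0_{\mathfrak{M}}(B) \cong L(1)$, of finite length. Feeding this into the first sequence, the vanishing of $H^{\geq 1}_{\mathfrak{M}}(L)$ yields $H^i_{\mathfrak{M}}(G(A)) \cong H^i_{\mathfrak{M}}(B) = 0$ for $2 \leq i \leq d-1$, while the tail
\[
H^0_{\mathfrak{M}}(B) \to H^0_{\mathfrak{M}}(L) \to H^1_{\mathfrak{M}}(G(A)) \to H^1_{\mathfrak{M}}(B) = 0
\]
exhibits $H^1_{\mathfrak{M}}(G(A))$ as a quotient of the finite-length module $L$, hence of finite length. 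Finally $H^0_{\mathfrak{M}}(G(A))$ is finite length automatically (it is the $\mathfrak{M}$-torsion of a finitely generated graded module). Thus $H^i_{\mathfrak{M}}(G(A))$ has finite length for all $i < d$, which is the desired conclusion.

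The main obstacle I anticipate is not the cohomology chase, which is routine once the setup is in place, but the careful verification that the two displayed sequences are genuinely sequences of graded $G(A)$-modules with $G(A)$-linear maps, and that the error terms collapse to length-one modules supported in the single degrees $1$ and $2$; this is precisely where parts (1)--(4) and the elementary fact $\widetilde{\mathfrak{m}} = \mathfrak{m}$ must be invoked. A secondary point to handle cleanly is the use of $d \geq 2$: it guarantees $\depth \widetilde{G}(A) \geq 2$, so that both $H^0$ and $H^1$ of $\widetilde{G}(A)$ vanish and the index range $2 \leq i \leq d-1$ in the argument is consistent.
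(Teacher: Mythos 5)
Your proposal establishes only assertion (5), and it does so by assuming assertions (1)--(4) outright. That is the genuine gap: (1)--(4) are the substance of the theorem, and they are where essentially all of the paper's work lies. The paper proves them by induction on $d$, with base case $d=2$ (Theorem \ref{d=2}): there, (3) follows from Itoh's formulas $e_1=\sum_{n\geq 0}\sigma_n$ and $e_2=\sum_{n\geq 1}n\sigma_n$ (Lemma \ref{sigma}), which together with $e_2=e_1-e+1$ force $\sigma_n=0$ for $n\geq 2$; (2) then follows from the Huckaba--Marley criterion \cite{HM}; (1) comes from a socle comparison in the Artinian reduction $C=A/J$, namely $\mathfrak{q}^2\subseteq \operatorname{Soc}(C)$ with $\ell(\mathfrak{q}^2)=e-h-1$ while $\dim_k\operatorname{Soc}(C)=e-h$; and (4) needs a further length count using Huckaba's result \cite{H}. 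The induction step for $d\geq 3$ additionally uses the exact sequences of \cite{Pu1}, Proposition \ref{nn}, Sally descent, and --- for (4) --- a construction of superficial elements $u,v$ such that $u-\alpha v$ is superficial for every unit $\alpha$, which in turn rests on $G(A)$ being generalized \CM \ and equidimensional (Proposition \ref{GCM}). None of this is in your write-up, so four of the five assertions remain unproved.

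The implication you do prove, (1)--(4) $\Rightarrow$ (5), is correct, and your route differs from the paper's. The paper deduces (5) from (2) alone: by Proposition \ref{GCM}, the exact sequence $0\to U\to G\to \widetilde{G}\to V\to 0$ with $U,V$ of finite length gives $G_P\cong \widetilde{G}_P$ for all homogeneous primes $P\neq \M$, so $G$ is \CM \ on the punctured spectrum, hence generalized \CM. Your two short exact sequences through $B=\bigoplus_{n\geq 0}\widetilde{\m^n}/\m^{n+1}$ yield strictly more: $H^i_{\M}(G(A))=0$ for $2\leq i\leq d-1$ and $\ell(H^i_{\M}(G(A)))\leq 1$ for $i=0,1$. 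Two caveats, though. First, the $G(A)$-module structure on $B$ is not a consequence of $\m^i\widetilde{\m^n}\subseteq \widetilde{\m^{i+n}}$ as you claim: well-definedness of the action requires $\m^{i+1}\widetilde{\m^n}\subseteq \m^{i+n+1}$, which fails in general and here must be extracted from (4) (giving $\widetilde{\m^{i+n+1}}=\m^{i+n+1}$ when $i+n+1\geq 3$) together with $\widetilde{\m}=\m$ in low degrees; alternatively, regard all three modules as modules over the Rees algebra, where the structure is automatic, and invoke independence of base for local cohomology. You flagged this verification as the delicate point, so it is a repairable slip rather than an error. Second, note the order of dependence in the paper: the generalized \CM \ property is obtained there from (2) and is then \emph{used} to prove (4) when $d\geq 3$; since your derivation of (5) consumes (4), it cannot substitute for Proposition \ref{GCM} inside a complete proof of the theorem without creating a circle.
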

By using above theorem we will give $h$-polynomials in this case.
	
\begin{thm}
	Let $(A,\mathfrak{m})$ be Cohen-Macaulay local ring of dimension $d\geq2$ with type$(A)=e-h$ and $e_2=e_1-e+1\neq0$. Now if depth $G(A)=0$ then $$h_A(z)=1+(h-1)z+(e-h)z^2+z(1-z)^{d+1}.$$
	In general, if depth $G(A)=n\neq d$ then $n\leq d-2$ and $h_A(z)=1+(h-1)z+(e-h)z^2+z(1-z)^{d-n+1}$, and  if $G(A)$ is Cohen-Macaulay then $h_A(z)=1+hz+(e-h-1)z^2$.
\end{thm}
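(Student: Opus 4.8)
The plan is to reduce the general depth case to the depth-zero case, which rests on Theorem 1.2, while the Cohen-Macaulay case comes from a direct numerical computation. The computational backbone throughout is the identity
$$e_2 - e_1 + e_0 = 1 + \sum_{i\geq 3}\binom{i-1}{2}h_i,$$
valid for any $h$-polynomial $h_A(z)=\sum_i h_i z^i$; one checks it by writing $e_0,e_1,e_2$ in terms of the $h_i$ and simplifying $\binom{i}{2}-i+1=\binom{i-1}{2}$ (using $\binom{-1}{2}=1$). Under the standing hypothesis $e_2=e_1-e+1$ the left-hand side equals $1$, so $\sum_{i\geq 3}\binom{i-1}{2}h_i=0$. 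I would first dispose of the Cohen-Macaulay assertion: when $G(A)$ is \CM \ the $h_i$ are nonnegative, so this relation forces $h_i=0$ for all $i\geq 3$; hence $s\leq 2$ and $h_A(z)=1+hz+h_2z^2$, and evaluating at $z=1$ gives $h_2=e-h-1$.

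For the depth-zero case I would pass to the Ratliff-Rush filtration and invoke Theorem 1.2. Write $\widetilde h(z)$ for the $h$-polynomial of $\{\widetilde{\mathfrak{m}^n}\}$. Since $\widetilde{\mathfrak{m}^n}=\mathfrak{m}^n$ for $n=0,1$ and for all $n\geq 3$, while $\ell(\widetilde{\mathfrak{m}^2}/\mathfrak{m}^2)=1$ by Theorem 1.2(1),(4), the two Hilbert--Samuel series differ only in the coefficient of $z^1$: comparing $\ell(A/\widetilde{\mathfrak{m}^{n+1}})$ with $\ell(A/\mathfrak{m}^{n+1})$ yields $\widetilde H^{(1)}(z)=H_A^{(1)}(z)-z$, and clearing denominators gives
$$h_A(z)=\widetilde h(z)+z(1-z)^{d+1}.$$
By Theorem 1.2(2) the ring $\widetilde G(A)$ is \CM, so $\widetilde h$ has nonnegative coefficients; as the two filtrations share the same Hilbert polynomial we have $\widetilde e_i=e_i$, and the identity above forces $\widetilde h_i=0$ for $i\geq 3$. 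Thus $\widetilde h(z)=1+\widetilde h_1 z+\widetilde h_2 z^2$. A short length count, using $\ell(\widetilde{\mathfrak{m}}/\widetilde{\mathfrak{m}^2})=\ell(\mathfrak{m}/\mathfrak{m}^2)-1=\mu(\mathfrak{m})-1$, gives $\widetilde h_1=\mu(\mathfrak{m})-1-d=h-1$ and then $\widetilde h_2=e-1-\widetilde h_1=e-h$. Substituting back produces $h_A(z)=1+(h-1)z+(e-h)z^2+z(1-z)^{d+1}$.

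For general depth $G(A)=n\neq d$ I would cut down by a maximal $G(A)$-regular superficial sequence $x_1,\ldots,x_n$ of initial degree one and set $\bar A=A/(x_1,\ldots,x_n)$. Then $\dim\bar A=d-n$, the ring $G(\bar A)=G(A)/(x_1^*,\ldots,x_n^*)$ has depth zero, and passing to the quotient preserves the $h$-polynomial, the multiplicity $e$, the embedding codimension $h$, and the type; in particular $\bar A$ again satisfies $e_2=e_1-e+1\neq 0$ and $\type(\bar A)=e-h$. If $d-n=1$ then $\bar A$ would be a one-dimensional \CM \ ring with $e_2=e_1-e+1\neq 0$ and $\depth G(\bar A)=0$, contradicting the validity of Valla's conjecture in dimension one; hence $n\leq d-2$ and $\dim\bar A\geq 2$. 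Applying the depth-zero formula to $\bar A$ with $d$ replaced by $d-n$, and using $h_{\bar A}(z)=h_A(z)$, yields $h_A(z)=1+(h-1)z+(e-h)z^2+z(1-z)^{d-n+1}$.

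The main work lies in the depth-zero case, specifically in pinning down the relation between the Ratliff-Rush and $\mathfrak{m}$-adic Hilbert series. This depends entirely on the precise length data in Theorem 1.2, and the cleanest route is through the first-difference (Hilbert--Samuel) series, so that the single discrepancy at $n=2$ becomes the clean correction term $z(1-z)^{d+1}$. The other two cases are then comparatively formal; the only subtlety is the exclusion of depth $d-1$, which I handle via the one-dimensional base case of Valla's conjecture.
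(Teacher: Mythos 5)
Your proposal is correct, and its skeleton matches the paper's: both rest on Theorem 1.2 (the structure of the Ratliff--Rush filtration in the depth-zero case), both use the relation $h_A(z)=\tilde h(z)+z(1-z)^{d+1}$ coming from $r(z)=\sum_n \ell(\widetilde{\m^{n+1}}/\m^{n+1})z^n = z$, and both handle positive depth by cutting down by a superficial sequence whose initial forms are $G(A)$-regular, so that the $h$-polynomial, $e$, $h$, and the type are preserved. Where you genuinely diverge is in how $\tilde h(z)$ is pinned down. The paper first treats $d=2$ using Itoh's formula $\tilde h_A(z)=1+(\sigma_0-\sigma_1)z+\sigma_1 z^2$ with $\sigma_0=e-1$, $\sigma_1=e-h$, and then for $d\geq 3$ transfers this via $\tilde h_A=\tilde h_B$, using that the Ratliff--Rush filtration behaves well modulo superficial elements (established inside the proof of Theorem 1.2). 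You instead argue uniformly in all dimensions $d\geq 2$: from $\tilde e_i=e_i$ and the formal identity $e_2-e_1+e_0=1+\sum_{i\geq 3}\binom{i-1}{2}h_i$, together with nonnegativity of the $h$-vector of the Cohen--Macaulay ring $\widetilde G(A)$, you force $\tilde h_i=0$ for $i\geq 3$, and then recover $\tilde h_1=h-1$, $\tilde h_2=e-h$ by direct length counts. This buys uniformity (no second induction or reduction to dimension two for the Hilbert series computation, and no further appeal to Itoh's $\sigma$-formula), and the same identity disposes of the Cohen--Macaulay case, which the paper states but does not argue explicitly. Two smaller differences: you exclude $\depth G(A)=d-1$ via the one-dimensional case of Valla's conjecture applied to $\bar A$, whereas the paper excludes it via Lemma \ref{dep02} (in dimension two the depth is $0$ or $2$); both are valid. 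Finally, note that your reduction step implicitly needs the residue field infinite for superficial sequences to exist, which is harmless by the standard base change in Remark 2.3 of the paper.
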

The result given below gives (with above set-up) a bound on the dimension, above which depth $G(A)$ is always positive.
\begin{thm}
	Let $(A,\mathfrak{m})$ be Cohen-Macaulay local ring with dimension $d$. Suppose $e_2=e_1-e+1\neq0$, type$(A)=e-h=k$. Now if $d>(k-1)^{<2>}$ then depth of $G(A)$ is non-zero.
\end{thm}
	We now describe in brief the contents of this paper. In section 2 we describe some preliminary results that we need. In section 3 we will discuss the case type$(A)=e-h-1$ and give the proof of Theorem 1.1. In section 4 we will discuss the case type$(A)=e-h$ and give the proofs of Theorems 1.2, 1.3 and 1.4. Finally in last section we will give some examples.

\section{Preliminaries}

In this article we will consider $(A,\mathfrak{m})$ as Noetherian  local ring of dimension $d$ with residue field $k=A/\mathfrak{m}$.
	 \s
	 	If $a$ is a nonzero element of $A$ then there is a largest integer $n$ such that $a\in \mathfrak{m}^n$. We will denote the image of $a$ in $\mathfrak{m}^n/\mathfrak{m}^{n+1}$ as $a^*$.

\s
If $d\geq1$	an element $x\in \mathfrak{m}$ is said to be $A$-superficial if there exists $c$ and $n_0$ such that
		$(\mathfrak{m}^{n+1} : x)\cap \mathfrak{m}^c =\mathfrak{m}^n$ for all $n\geq n_0$.
		
		 Superficial elements always exists when residue field is infinite.
		 If depth $A>0$ then every $A$-superficial element is also $A$-regular. If $x$ is $A$-superficial and regular then $(\mathfrak{m}^{n+1} : x)=\mathfrak{m}^n$ for $n\gg0.$ \\
	A sequence $x_1,\ldots,x_r$ in a local ring $(A,\mathfrak{m})$ is said to be an $A$-superficial sequence if $x_1$ is $A$-superficial and $x_i$ is $A/{(x_1,\ldots,x_{i-1})}$-superficial for $2\leq i\leq r.$
	
\begin{remark}
	If the residue field of $A$ is finite then we can  replace $A$ by $A'=A[X]_S$ where $S=A[X]\backslash \mathfrak{m}A[X]$. The residue field of $A'$ is $k(X)$, the field of rational functions over $k.$ Furthermore, $$H(A',n)= H(A,n) \quad \text{for all}\quad n\geq0\quad\text{and depth}\ G(A')= \text{depth}\ G(A).$$
	\end{remark}

\s Let $(A,\mathfrak{m})$ be a Noetherian local ring, and $I$ a proper ideal of $A$, then  $y_1,\ldots,y_n$ are said to be analytically independent in $I$ if whenever $F(Y_1,\ldots,Y_n)\in A[Y_1,\ldots,Y_n]$ is a homogeneous polynomial of degree $r$ such that $F(y_1,\ldots,y_n)\in I^r\mathfrak{m}$, then all the coefficients of $F$ are in $\mathfrak{m}$.\\
	Let $(A,\mathfrak{m})$ be a Cohen-Macaulay local ring of dimension $d$, and $x_1,\ldots ,x_d$ be an $A$-superficial sequence then $J=(x_1,\ldots ,x_d)$ is a minimal reduction of $\mathfrak{m}$ (for instance see \cite[Lemma 6]{Pu0}), and $x_1,\ldots ,x_d$ are analytically independent in $\mathfrak{m}$;  see \cite[Lemma 2]{NR}.

\s \label{superf}
  We need to recall the construction of a superficial element as we need  it in the proof of Theorem \ref{B}.
	Assume the residue field of $A$ is infinite. Set $G=G(A)$ and let $\M$ be its maximal homogeneous ideal. We denote the $k$-vector space $\mathfrak{m}/\mathfrak{m}^2$  by $V$. If $P$ is a prime ideal of $G$ with $P\neq\M$ then $P\cap V\neq V$. As $k$ is infinite we get $$V\neq \bigcup_{P\in \Ass G, P\neq \M}P\cap V$$
	If $x\in \mathfrak{m}$ is such that $$x^*\in V \backslash \bigcup_{P\in \Ass G, P\neq \M}P\cap V$$ then $x$ is $A$-superficial.

\s
	Let $A$ be Cohen-Macaulay with  $d=1$. Let $x$ be an $A$-superficial element. Set $\rho_i = \ell(\mathfrak{m}^{i+1}/{x\mathfrak{m}^i})$ for all $i\geq0$ then
\begin{enumerate}

	\item If deg $h_A(z) =s$ then $e=\rho_0+1, \rho_i=0$ for all $i\geq s$, $e_1=\sum_{i=0}^{s-1}\rho_i$ and $e_2=\sum_{i=1}^{s-1}i\rho_i$.
	\item If $e_2=e_1-e+1$ then $s\leq2$ and $G(A)$ is Cohen-Macaulay, see \cite[Proposition 6.21]{V}.
\end{enumerate}

\s
	If depth $A>0$ then for $i\geq1$ we define Ratliff-Rush closure of $\mathfrak{m}^i$ by $$\widetilde{\mathfrak{m}^i}=\bigcup_{k\geq1}{(\mathfrak{m}^{i+k}:\mathfrak{m}^k)}.$$ Then $\{\widetilde{\mathfrak{m}^i}\}_{i\geq0}$ is called the Ratliff-Rush filtration of $A$ with respect to $\mathfrak{m}$.\\ The associated graded ring of $A$ with respect to Ratliff-Rush filtration $\{\widetilde{\mathfrak{m}^i}\}_{i\geq0}$  is  $\displaystyle{ \widetilde{G}(A)} = \bigoplus_{n \geq 0}{\widetilde{\mathfrak{m}^n}}/{\widetilde{\mathfrak{m}^{n+1}}} $.\\ We have
	\begin{enumerate}
	  \item
	$\widetilde{\m^i}\widetilde{\m^j} \subseteq \widetilde{\m^{i+j}}$ for all $ i, j \geq 0$.
	\item
	$\widetilde{\m^i} = \m^i$ for all $i \gg 0$.
	\item
	If $x$ is $A$-superficial then $(\widetilde{\m^{i+1}} \colon x) = \widetilde{\m^i}$ for all $i \geq 0$.
	\item
	$\depth G(A) > 0$ if and only if $\widetilde{\m^i} = \m^i$ for all $i \geq 1$.
	\end{enumerate}

\begin{lemma}\label{sigma}\cite[Theorem 3]{It}
	Let $(A,\mathfrak{m})$ be Cohen-Macaulay local ring with dimension two, and $x,y$ be $A$-superficial sequence. Set $J=(x,y)$ and $\sigma_n=\ell(\widetilde{\mathfrak{m}^{n+1}}/{J\mathfrak{m}^n})$. Then $$e_1=\sum_{n\geq0}{\sigma_n} \quad \text{and} \quad e_2=\sum_{n\geq 1}{n\sigma_n} $$
\end{lemma}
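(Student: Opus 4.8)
The plan is to reduce to dimension one by killing the first superficial element $x_1$, exploiting the fact, encoded in the Ratliff--Rush property $(\widetilde{\mathfrak{m}^{i+1}}:x_1)=\widetilde{\mathfrak{m}^i}$, that the initial form $x_1^{*}\in\widetilde{G}(A)_1$ is a nonzerodivisor on $\widetilde{G}(A)$. First I would pass to the case of an infinite residue field (as explained in Section~2). The nonzerodivisor claim is then immediate: if $a\in\widetilde{\mathfrak{m}^i}$ with $x_1a\in\widetilde{\mathfrak{m}^{i+2}}$, then $a\in(\widetilde{\mathfrak{m}^{i+2}}:x_1)=\widetilde{\mathfrak{m}^{i+1}}$, so $x_1^{*}$ annihilates no nonzero homogeneous element. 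In particular $\depth\widetilde{G}(A)>0$, which is exactly what makes reduction modulo $x_1$ harmless for $e_1$ and $e_2$.

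Next I would set $C=A/x_1A$, a one-dimensional Cohen--Macaulay ring, and define the filtration $E_n=(\widetilde{\mathfrak{m}^n}+x_1A)/x_1A$ on $C$. Rewriting $(\widetilde{\mathfrak{m}^n}:x_1)=\widetilde{\mathfrak{m}^{n-1}}$ as $\widetilde{\mathfrak{m}^n}\cap x_1A=x_1\widetilde{\mathfrak{m}^{n-1}}$, one identifies the associated graded ring of $\{E_n\}$ on $C$ with $\widetilde{G}(A)/x_1^{*}\widetilde{G}(A)$. Since $x_1^{*}$ is a nonzerodivisor, $H_{\widetilde{G}(A)/x_1^{*}}(z)=(1-z)H_{\widetilde{G}(A)}(z)$, and because the Ratliff--Rush and $\mathfrak{m}$-adic filtrations share the same Hilbert polynomial, the filtration $\{E_n\}$ on the one-dimensional ring $C$ has $e_i(\{E_n\},C)=e_i(A)$ for $i=0,1,2$. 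The image $\bar{y}$ of $x_2=y$ is a regular superficial reduction of $\{E_n\}$.

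I would then establish the one-dimensional coefficient formula for the good filtration $\{E_n\}$, mirroring the dimension-one statement recorded in the excerpt. Put $\rho_n=\ell(E_{n+1}/\bar{y}E_n)$. Because $\bar{y}$ is regular and a minimal reduction, $\ell(E_n/\bar{y}E_n)=e_0$ for every $n$, whence $\rho_n=e_0-\ell(E_n/E_{n+1})$. Summing, the generating function is $\sum_n\rho_n z^n=(e_0-h(z))/(1-z)$, where $h$ is the $h$-polynomial of $\{E_n\}$ (so $h^{(i)}(1)/i!=e_i(A)$); expanding $-\sum_{i\ge1}e_i(z-1)^i$ divided by $-(z-1)$ gives $\sum_n\rho_n=e_1$ and $\sum_n n\rho_n=e_2$.

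The last and most delicate step is to identify $\rho_n$ with $\sigma_n$. From $E_{n+1}=(\widetilde{\mathfrak{m}^{n+1}}+x_1A)/x_1A$ and $\bar{y}E_n=(y\widetilde{\mathfrak{m}^n}+x_1A)/x_1A$ one computes $\rho_n=\ell\bigl(\widetilde{\mathfrak{m}^{n+1}}/(y\widetilde{\mathfrak{m}^n}+(\widetilde{\mathfrak{m}^{n+1}}\cap x_1A))\bigr)$, and the intersection equals $x_1\widetilde{\mathfrak{m}^n}$ by the Ratliff--Rush property, so $\rho_n=\ell(\widetilde{\mathfrak{m}^{n+1}}/J\widetilde{\mathfrak{m}^n})$ with $J=(x_1,y)$. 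I expect the main obstacle to be the passage between $J\widetilde{\mathfrak{m}^n}$ and $J\mathfrak{m}^n$ appearing in $\sigma_n=\ell(\widetilde{\mathfrak{m}^{n+1}}/J\mathfrak{m}^n)$: one must control $\ell(J\widetilde{\mathfrak{m}^n}/J\mathfrak{m}^n)$ termwise and verify it does not affect the two alternating sums, and this is the point where the regular-sequence structure of $J$ together with the Ratliff--Rush identity $(\widetilde{\mathfrak{m}^{n+1}}:x_1)=\widetilde{\mathfrak{m}^n}$ must be combined with care. Once this reconciliation is secured, the two displayed formulas for $e_1$ and $e_2$ follow at once.
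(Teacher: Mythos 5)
You should know at the outset that the paper contains no proof of Lemma~\ref{sigma}: it is quoted from Itoh \cite[Theorem 3]{It}. So there is no internal argument to compare yours against, and your proposal is a genuine self-contained proof. It is also essentially correct. Each step checks out: $x^*$ is a nonzerodivisor on $\widetilde{G}(A)$ because $(\widetilde{\mathfrak{m}^{i+2}}:x)=\widetilde{\mathfrak{m}^{i+1}}$; the filtration $E_n=(\widetilde{\mathfrak{m}^n}+xA)/xA$ on $C=A/xA$ has associated graded ring $\widetilde{G}(A)/x^*\widetilde{G}(A)$ (modular law plus $\widetilde{\mathfrak{m}^n}\cap xA=x(\widetilde{\mathfrak{m}^n}:x)=x\widetilde{\mathfrak{m}^{n-1}}$); the $h$-polynomial of $\{E_n\}$ therefore equals $\tilde{h}_A(z)$, whose normalized derivatives at $z=1$ give $e_0,e_1,e_2$ of $A$ since $h_A(z)-\tilde{h}_A(z)$ is divisible by $(1-z)^{3}$ (because $\widetilde{\mathfrak{m}^n}=\mathfrak{m}^n$ for $n\gg0$); and the one-dimensional identity $\rho_n=e_0-\ell(E_n/E_{n+1})$ with the generating-function expansion then yields $e_1=\sum_{n\geq0}\ell(\widetilde{\mathfrak{m}^{n+1}}/J\widetilde{\mathfrak{m}^n})$ and $e_2=\sum_{n\geq1}n\,\ell(\widetilde{\mathfrak{m}^{n+1}}/J\widetilde{\mathfrak{m}^n})$. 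In effect you have re-derived, by explicit reduction to dimension one, the Huckaba--Marley type equality \cite{HM} for the Ratliff--Rush filtration, whose associated graded ring automatically has depth at least $d-1=1$ in dimension two --- which is exactly why equality, rather than mere inequality, holds.

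The one point you leave open --- passing from $J\widetilde{\mathfrak{m}^n}$ to the $J\mathfrak{m}^n$ printed in the statement --- is not a gap in your argument but a misprint in the statement, and no ``reconciliation'' is possible: with $\sigma_n=\ell(\widetilde{\mathfrak{m}^{n+1}}/J\mathfrak{m}^n)$ read literally, the lemma is false. Indeed $J\mathfrak{m}^n\subseteq J\widetilde{\mathfrak{m}^n}\subseteq\widetilde{\mathfrak{m}^{n+1}}$, so $\ell(\widetilde{\mathfrak{m}^{n+1}}/J\mathfrak{m}^n)=\ell(\widetilde{\mathfrak{m}^{n+1}}/J\widetilde{\mathfrak{m}^n})+\ell(J\widetilde{\mathfrak{m}^n}/J\mathfrak{m}^n)$, a sum of nonnegative corrections, so the two versions agree only if $J\widetilde{\mathfrak{m}^n}=J\mathfrak{m}^n$ for all $n$. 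This fails precisely in the cases the paper is most interested in: in Wang's example of Section 5 (dimension two, $\depth G(A)=0$, $e=6$, $e_1=8$), Theorem~\ref{d=2} gives $J\widetilde{\mathfrak{m}^2}=\widetilde{\mathfrak{m}^3}=\mathfrak{m}^3$ with $\ell(\mathfrak{m}^3/J\mathfrak{m}^2)=2$, so the literal sum exceeds $e_1$ by at least $2$. The intended definition is the one with $J\widetilde{\mathfrak{m}^n}$: that is how $\sigma_n$ is re-defined in Section 4, and it is the only reading consistent with every application of the lemma in the paper (e.g.\ the deduction ``$\sigma_2=0$, i.e.\ $\widetilde{\mathfrak{m}^3}=J\widetilde{\mathfrak{m}^2}$'' in Proposition~\ref{greater}, Theorem~\ref{A} and Theorem~\ref{d=2}). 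Since that is exactly the statement your argument establishes, your proof is complete as written; the final paragraph's worry about controlling $\ell(J\widetilde{\mathfrak{m}^n}/J\mathfrak{m}^n)$ should simply be replaced by the remark that $J\mathfrak{m}^n$ is a typo for $J\widetilde{\mathfrak{m}^n}$ (the two agree for $n=0,1$ because $\widetilde{\mathfrak{m}}=\mathfrak{m}$, which is why the discrepancy is easy to overlook).
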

 Let $(A,\m)$ be a \CM \ local ring of dimension $d$. Let $x_1,\ldots, x_d$ be an $A$-superficial sequence. Set $J = (x_1,\ldots, x_d)$. Then $\widetilde{\mathfrak{m}^2}\cap J = J\mathfrak{m}$, because if $a=r_1x_1+\ldots +r_dx_d \in \widetilde{\mathfrak{m}^2}$, then there exists an $n\geq0$ such that $ax_1^n\in \mathfrak{m}^{n+2}$. Since $x_i$'s are analytically independent  we have $r_i\in \mathfrak{m}$ for $i=1,\ldots,d$.

\s \label{mod-sup}Let $x \in \m$ be $A$-superficial and $A$-regular. Set $B = A/(x)$. Also set $b_n = \ell((\m^{n+1} \colon x)/\m^n)$ and
$b_x(z) = \sum_{n \geq 0}b_n z^n$, then $b_x(z) \in \Z[z]$. Furthermore, we have
\[
h_A(z) = h_B(z) - (1-z)^db_x(z).
\]
So $e_i(A) = e_i(B)$ for $0 \leq i \leq d - 1$. Also $x^*$ is $G(A)$-regular if and only if $b_x(z) = 0$. In this case we have $G(B) = G(A)/(x^*)$.

\s	 Assume $A$ is Cohen-Macaulay local ring of dimension $d$ and     $x_1,\ldots,x_r$ is an $A$-superficial sequence with $r\leq d$. Set $B=A/{(x_1,\ldots,x_r)}$. Then
	\begin{enumerate}
			\item $e_i(A)=e_i(B)$ for all $i=0,\ldots,d-r$.
		\item depth $G(A)\geq r$ if and only if $x_1^*,\ldots,x_r^*$ is $G(A)$-regular. In this case we have $G(B)=G(A)/{(x_1^*,\ldots,x_r^*)}$.
		\item (Sally descent:) depth $G(A)\geq r+1$ if and only if depth $G(B)\geq1$.
	
	\end{enumerate}

\s \label{quot}
	 Assume $(A,\m)$ is \CM \ of dimension $d \geq 2$. Let $x_1,\ldots, x_d$ be an $A$-superficial sequence. Set $J = (x_1,\ldots, x_d)$ and $(B,\n) = (A/(x_1), \m/(x_1))$.  If $I$ is an ideal in $A$ then set $\ov{I}$ to be its image in $B$. Notice $\ov{\widetilde{\m^i}} \subseteq \widetilde{\n^i}$. So we have a natural map
	$$\eta_i \colon \widetilde{\m^{i+1}}/ J\widetilde{\m^i}  \rt \widetilde{\n^{i+1}}/\ov{J} \widetilde{\n^i} \quad \text{for all} \ i \geq 0. $$

	We show
	\begin{proposition}\label{nn}
		(with setup as in \ref{quot})
		\begin{enumerate}[\rm (1)]
			\item
			If $\ov{\widetilde{\m^s}} = \widetilde{\n^s}$ for some $s$ then $\eta_s$ is injective.
			\item
			If $\ov{\widetilde{\m^j}} = \widetilde{\n^j}$ for $j = s, s+1$ then $\eta_s$ is bijective.
		\end{enumerate}
	\end{proposition}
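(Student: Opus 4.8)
The plan is to argue directly on representatives via the reduction map $\pi \colon A \rt B = A/(x_1)$, $a \mapsto \ov{a}$, and to make the Ratliff-Rush colon formula do the heavy lifting. First I would record why $\eta_s$ is well defined and, more importantly, identify the relevant preimage. Since $\pi$ is surjective we have $\ov{J\widetilde{\m^s}} = \ov{J}\,\ov{\widetilde{\m^s}} \sub \ov{J}\widetilde{\n^s}$, using the given inclusion $\ov{\widetilde{\m^s}} \sub \widetilde{\n^s}$, so $J\widetilde{\m^s}$ lands inside $\ov{J}\widetilde{\n^s}$ and $\eta_s$ makes sense. The key observation is that under the hypothesis $\ov{\widetilde{\m^s}} = \widetilde{\n^s}$ this inclusion becomes the equality $\ov{J}\widetilde{\n^s} = \ov{J}\,\ov{\widetilde{\m^s}} = \ov{J\widetilde{\m^s}}$; consequently the full preimage $\pi^{-1}(\ov{J}\widetilde{\n^s})$ is exactly $J\widetilde{\m^s} + (x_1)$.

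For part (1) I would take $a \in \widetilde{\m^{s+1}}$ representing an element of $\ker \eta_s$, that is $\ov{a} \in \ov{J}\widetilde{\n^s}$. By the preceding paragraph $a \in J\widetilde{\m^s} + (x_1)$, so write $a = b + c x_1$ with $b \in J\widetilde{\m^s}$ and $c \in A$. Then $cx_1 = a - b$, and since $b \in J\widetilde{\m^s} \sub \m\widetilde{\m^s} \sub \widetilde{\m^{s+1}}$ (using $\m \sub \widetilde{\m}$ together with property (1) of the Ratliff-Rush filtration) while $a \in \widetilde{\m^{s+1}}$, we obtain $cx_1 \in \widetilde{\m^{s+1}}$. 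Now the main tool enters: because $x_1$ is $A$-superficial and $A$ is \CM \ of positive depth, property (3) gives $(\widetilde{\m^{s+1}} \colon x_1) = \widetilde{\m^s}$, whence $c \in \widetilde{\m^s}$ and therefore $cx_1 \in x_1\widetilde{\m^s} \sub J\widetilde{\m^s}$. Thus $a = b + cx_1 \in J\widetilde{\m^s}$, its class vanishes, and $\eta_s$ is injective.

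For part (2) the injectivity is already supplied by (1) applied with $j = s$, so only surjectivity remains, and here the hypothesis $\ov{\widetilde{\m^{s+1}}} = \widetilde{\n^{s+1}}$ does everything: given $v \in \widetilde{\n^{s+1}}$, surjectivity of $\pi$ restricted to $\widetilde{\m^{s+1}}$ furnishes $a \in \widetilde{\m^{s+1}}$ with $\ov{a} = v$, and then $\eta_s(a + J\widetilde{\m^s}) = v + \ov{J}\widetilde{\n^s}$. I expect the only genuine obstacle to be the injectivity step, and within it the extraction of the $x_1$-component: the whole argument rests on the colon identity $(\widetilde{\m^{s+1}} \colon x_1) = \widetilde{\m^s}$, which is exactly what allows the stray term $cx_1$ to be absorbed back into $J\widetilde{\m^s}$. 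The well-definedness and the surjectivity are then formal consequences of the stated inclusion $\ov{\widetilde{\m^i}} \sub \widetilde{\n^i}$ and the equalities $\ov{\widetilde{\m^j}} = \widetilde{\n^j}$.
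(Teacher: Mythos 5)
Your proof is correct and follows essentially the same route as the paper: both arguments reduce injectivity to writing a kernel element as (element of $J\widetilde{\m^s}$) plus a multiple of $x_1$ — you via the preimage identity $\pi^{-1}(\ov{J}\widetilde{\n^s}) = J\widetilde{\m^s} + (x_1)$, the paper by lifting the coefficients $a_t \in \widetilde{\n^s}$ to $b_t \in \widetilde{\m^s}$ — and then absorb that multiple using the colon formula $(\widetilde{\m^{s+1}} \colon x_1) = \widetilde{\m^s}$, with surjectivity in (2) being immediate from $\ov{\widetilde{\m^{s+1}}} = \widetilde{\n^{s+1}}$. Your write-up is in fact slightly more explicit than the paper's at the step $J\widetilde{\m^s} \sub \widetilde{\m^{s+1}}$, which the paper uses tacitly.
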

	\begin{proof}
		(1) Suppose $\eta_s( p + J \widetilde{\m^s}) = 0$. Then $\ov{p} = \sum_{ t = 2}^{d}a_tx_t$ for some $a_t \in \widetilde{\n^s}$. As $\ov{\widetilde{\m^s}} = \widetilde{\n^s}$ there exists $b_t \in \widetilde{\m^s}$ with $\ov{b_t} = a_t$ for $t = 2,\ldots, d$. So
		\[
		p = \sum_{t =2}^{d} b_tx_t +   rx_1 \quad \text{for some} \ r \in A.
		\]
		We get  $xr \in \widetilde{\m^{s+1}}$. So $r \in \widetilde{\m^s}$. Thus $p \in J \widetilde{\m^s}$. It follows that $\eta_s$ is injective.
		
		(2) As $\ov{\widetilde{\m^{s+1}}} = \widetilde{\n^{s+1}}$ then clearly $\eta_s$ is surjective. As $\ov{\widetilde{\m^s}} = \widetilde{\n^s}$ we get by (1) that $\eta_s$ is injective. So $\eta_s$ is bijective.
	\end{proof}

\begin{lemma}\label{dep02}
	Let $(A,\mathfrak{m})$ be Cohen-Macaulay local ring with dimension $d=2$, and if $e_2=e_1-e+1$ then depth $G(A) =0$ or $2$.
	
\end{lemma}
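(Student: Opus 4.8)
The plan is to rule out the intermediate value $\depth G(A) = 1$ by showing that $\depth G(A) \geq 1$ already forces $\depth G(A) = 2$. After replacing $A$ by $A[X]_S$ as in Remark 2.3, which alters neither the Hilbert function nor $\depth G(A)$, I may assume the residue field is infinite, so that a superficial sequence $x_1, x_2$ exists. Set $J = (x_1, x_2)$ and $B = A/(x_1)$, a one-dimensional \CM\ local ring.

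Now suppose $\depth G(A) \geq 1$. By the superficial-sequence facts this means $x_1^*$ is $G(A)$-regular, so in the notation of \ref{mod-sup} we have $b_{x_1}(z) = 0$ and hence $h_A(z) = h_B(z)$. Reading off Hilbert coefficients from the (now equal) $h$-polynomials gives $e_i(A) = e_i(B)$ for \emph{every} $i \geq 0$; in particular $e(B) = e(A)$, $e_1(B) = e_1(A)$ and $e_2(B) = e_2(A)$. Thus $B$ is a one-dimensional \CM\ local ring satisfying $e_2(B) = e_1(B) - e(B) + 1$.

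At this point I invoke the one-dimensional case recorded in the preliminaries: when $e_2 = e_1 - e + 1$ in dimension one, $G(B)$ is \CM, so $\depth G(B) = 1$. By Sally descent (item (3) of the superficial-sequence facts, with $r = 1$), $\depth G(B) \geq 1$ yields $\depth G(A) \geq 2$, and since $\dim A = 2$ we conclude $\depth G(A) = 2$. Together with the remaining possibility $\depth G(A) = 0$, this shows the depth is either $0$ or $2$.

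The argument is short, and its only real content — the step I would guard most carefully — is the passage to $B$: the hypothesis $\depth G(A) \geq 1$ is used precisely to guarantee that cutting by $x_1$ preserves the relation $e_2 = e_1 - e + 1$. This rests on $x_1^*$ being $G(A)$-regular, equivalently $b_{x_1}(z) = 0$, which is exactly what forces $h_A = h_B$ and hence equality of all the $e_i$; without regularity of $x_1^*$ the coefficient $e_2$ could change under the cut and the one-dimensional criterion would not apply. As an alternative to the reduction to $B$, one can argue directly with Itoh's formulas in Lemma \ref{sigma}: since $\widetilde{\m} = \m$ one has $\sigma_0 = \ell(\m/J) = e - 1$, so $e_1 - e + 1 = \sum_{n \geq 1}\sigma_n$, and the hypothesis $e_2 = e_1 - e + 1$ becomes $\sum_{n \geq 1}(n-1)\sigma_n = 0$, forcing $\sigma_n = 0$ for all $n \geq 2$; combined with the characterization $\depth G(A) > 0 \Leftrightarrow \widetilde{\m^i} = \m^i$ for all $i$, this gives $\m^{n+1} = J\m^n$ for $n \geq 2$ and leads to the same dichotomy.
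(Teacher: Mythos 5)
Your proposal is correct and is essentially the paper's own proof: both arguments cut by a superficial element $x_1$, use $\depth G(A)\geq 1$ to guarantee $e_i(A)=e_i(B)$ for $i=0,1,2$ (the paper asserts this directly, while you justify it via $b_{x_1}(z)=0$ and $h_A=h_B$ from \ref{mod-sup}), apply the one-dimensional result \cite[Prop.\ 6.21]{V} to $B$, and finish with Sally descent. There is no gap; your closing sketch via Lemma \ref{sigma} is an unneeded aside.
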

\begin{proof}
	Let $x$ be an $A$-superficial element and set $B=A/(x)$. Now if depth $G(A)\neq0$ then $e_2(A)=e_2(B),e_1(A)=e_1(B)$ and $e(A)=e(B)$. So $B$ is Cohen-Macaulay local ring of dimension one and $e_2(B)=e_1(B)-e(B)+1$.  So  $\depth G(B)=1$  see \cite[Prop 6.21]{V}. By Sally-descent,
 $\depth G(A)=2$.
\end{proof}

Now we will prove a result which we will use in the proof of Theorem \ref{B}.
\begin{lemma}\label{new}
	Let $(A,\mathfrak{m})$ be a Cohen-Macaulay local ring of dimension $d\geq2$ with an infinite residue field. Let $x_1,\ldots,x_d$ be an $A$-superficial sequence. Set $J=(x_1,\ldots,x_d)$, $(B,\mathfrak{n})=(A/(x_1),\mathfrak{m}/(x_1))$ and $\overline{J}=(x_2,\ldots,x_d)B$. Then we have an exact sequence  $$0\rightarrow \frac{(\mathfrak{m}^3:x)}{\mathfrak{m}^2}\xrightarrow{f}\frac{\mathfrak{m}^3}{J{\mathfrak{m}^2}}\xrightarrow{g}\frac{\mathfrak{n}^3}{\overline{J}{\mathfrak{n}^2}}\rightarrow0 $$ where $f(a+\mathfrak{m}^2)=ax_1+J\mathfrak{m}^2$ and $g$ is natural surjection.
	
\end{lemma}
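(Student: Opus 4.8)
The plan is to read the asserted sequence as the statement that, modulo $J\mathfrak{m}^2$, the kernel of the reduction map to $B$ is exactly the image of multiplication by $x_1$; throughout I abbreviate $x = x_1$. First I would dispose of the easy structural maps. The natural surjection $A \to B$ carries $\mathfrak{m}^3$ onto $\mathfrak{n}^3$ and $J\mathfrak{m}^2$ onto $\overline{J}\mathfrak{n}^2$, so $g$ is a well-defined surjection, which gives exactness at the right end. For $f$, note that $a \in (\mathfrak{m}^3 \colon x)$ gives $ax \in \mathfrak{m}^3$, while $a \in \mathfrak{m}^2$ gives $ax \in x\mathfrak{m}^2 \subseteq J\mathfrak{m}^2$; hence $f$ is a well-defined $A$-linear map. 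Since $x \mapsto 0$ in $B$ we have $g \circ f = 0$, so the image of $f$ is contained in $\ker g$.

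For exactness in the middle I would prove $\ker g \subseteq \operatorname{image} f$ by lifting. If $p \in \mathfrak{m}^3$ has $\overline{p} \in \overline{J}\mathfrak{n}^2$, then $p$ lies in the preimage $(x_2,\dots,x_d)\mathfrak{m}^2 + (x)$, say $p = q + cx$ with $q \in (x_2,\dots,x_d)\mathfrak{m}^2 \subseteq J\mathfrak{m}^2$ and $c \in A$. Since $q \in \mathfrak{m}^3$, we get $cx = p - q \in \mathfrak{m}^3$, so $c \in (\mathfrak{m}^3 \colon x)$, and then $p + J\mathfrak{m}^2 = cx + J\mathfrak{m}^2 = f(c + \mathfrak{m}^2)$. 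This identifies $\ker g$ with the image of $f$.

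The main work, and the step I expect to be the real obstacle, is the injectivity of $f$: from $a \in (\mathfrak{m}^3 \colon x)$ and $ax \in J\mathfrak{m}^2$ I must force $a \in \mathfrak{m}^2$. Writing $ax = \sum_{i=1}^d x_i m_i$ with $m_i \in \mathfrak{m}^2$ and setting $b = a - m_1$ yields $xb \in (x_2,\dots,x_d)$. Since $x_1,\dots,x_d$ is a system of parameters in a Cohen-Macaulay ring it is a regular sequence, so reading off the first coordinate of the Koszul syzygies shows $b \in (x_2,\dots,x_d)$. The naive argument stops here and only gives $a \in (x_2,\dots,x_d) + \mathfrak{m}^2$, which is too weak; the upgrade is the crux. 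Here I would invoke the Ratliff-Rush filtration: from $\mathfrak{m}^3 \subseteq \widetilde{\mathfrak{m}^3}$ and the property $(\widetilde{\mathfrak{m}^3} \colon x) = \widetilde{\mathfrak{m}^2}$ we get $a \in (\mathfrak{m}^3 \colon x) \subseteq \widetilde{\mathfrak{m}^2}$, hence $b = a - m_1 \in \widetilde{\mathfrak{m}^2}$. Thus $b \in \widetilde{\mathfrak{m}^2} \cap (x_2,\dots,x_d)$, and the same analytic-independence computation that proves $\widetilde{\mathfrak{m}^2} \cap J = J\mathfrak{m}$ applies: choosing $n$ with $b\mathfrak{m}^n \subseteq \mathfrak{m}^{n+2}$ and writing $b = \sum_{i \geq 2} r_i x_i$, the relation $bx^n = \sum_{i \geq 2} r_i x_i x^n \in \mathfrak{m}^{n+2}$ forces all $r_i \in \mathfrak{m}$, so $b \in (x_2,\dots,x_d)\mathfrak{m} \subseteq \mathfrak{m}^2$. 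Therefore $a = b + m_1 \in \mathfrak{m}^2$, which shows $f$ is injective and completes the exact sequence.
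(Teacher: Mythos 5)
Your proof is correct, and its skeleton — well-definedness of $f$ and $g$, the observation $g\circ f=0$, the lifting argument identifying $\ker g$ with $\operatorname{image} f$, and injectivity of $f$ via the regular-sequence property followed by analytic independence — matches the paper's proof step for step. The one genuine divergence is inside the injectivity argument, and there your self-assessment of the "naive argument" is mistaken: it does not stop where you say it does. The paper continues it directly: having written $a-a_1=\sum_{i\geq2}r_ix_i$ with $a_1\in\mathfrak{m}^2$, multiply by $x_1$; since $ax_1\in\mathfrak{m}^3$ (this is exactly the hypothesis $a\in(\mathfrak{m}^3:x_1)$) and $a_1x_1\in\mathfrak{m}^3$, one gets $\sum_{i\geq2}r_ix_ix_1\in\mathfrak{m}^3$, and analytic independence applied to the degree-two form $\sum_{i\geq2}r_iY_iY_1$ already forces $r_i\in\mathfrak{m}$, hence $a=a_1+\sum_{i\geq2}r_ix_i\in\mathfrak{m}^2$. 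Your detour through the Ratliff-Rush filtration — using $(\widetilde{\mathfrak{m}^3}:x_1)=\widetilde{\mathfrak{m}^2}$ to place $b$ in $\widetilde{\mathfrak{m}^2}\cap(x_2,\ldots,x_d)$ and then running the analytic-independence computation on $bx_1^n\in\mathfrak{m}^{n+2}$ — is valid (depth $A\geq 2$ and $x_1$ superficial, so the colon property applies), and it is essentially the paper's own argument for $\widetilde{\mathfrak{m}^2}\cap J=J\mathfrak{m}$; but it buys nothing here, since the hypothesis $ax_1\in\mathfrak{m}^3$ feeds the same trick without any mention of Ratliff-Rush closures and with a single power of $x_1$. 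Both routes end in the same Northcott--Rees analytic-independence argument; the paper's is the more elementary and self-contained of the two.
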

\begin{proof}
 (for $d=2$ case one can also see \cite[Lemma 2.2]{rv}).\\
Surjectivity of $g$ is obvious. Furthermore clearly $g\circ f=0$. \\
Let $\alpha \in \ker g$. Then $\alpha= b+J\mathfrak{m}^2$ where $b\in \mathfrak{m}^3$, and there is a $c\in J\mathfrak{m}^2$ such that $b-c=rx_1$ with $r\in A$. So $r\in (\mathfrak{m}^3 : x_1)$.
Notice $f(r+\mathfrak{m}^2)= rx_1+J\mathfrak{m}^2=b+J\mathfrak{m}^2=\alpha$. This gives us $\ker g = \image f$. \\
 To prove exactness of above complex we  only have to show that $f$ is injective. Let $a\in (\mathfrak{m}^3:x_1)$ and $ax_1\in J\mathfrak{m}^2.$ So $ax_1=a_1x_1+\ldots+a_dx_d$ with $a_i\in \mathfrak{m}^2$ for $i=1,\ldots,d$. This implies $(a-a_1)\in (x_2,\ldots,x_d)$, because $x_2,\ldots,x_d,x_1$ is a regular sequence.
 Therefore
$ a-a_1= r_2x_2+\ldots+r_dx_d$ for some $r_i\in A$ for all $i=2,\ldots,d$. So we have
$ ax_1-a_1x_1=r_2x_2x_1+\ldots+r_dx_dx_1$. Therefore,
$$ r_2x_2x_1+\ldots+r_dx_dx_1\in \mathfrak{m}^3. $$
As $x_i$'s are analytically independent we get
$ r_i\in \mathfrak{m}$ for all $i=2,\ldots,d$.
Since
$ a=a_1+r_2x_2+\ldots +r_dx_d $, so
$ a \in \mathfrak{m}^2$. This gives injectivity of  $f$.
\end{proof}

\begin{proposition}\label{GCM}
	Set $G=G(A)$ and $\widetilde{G}=\widetilde{G}(A)$. Assume $\widetilde{G}$ is Cohen-Macaulay. Then
	\begin{enumerate}[\rm (1)]
		\item $G$ is generalized Cohen-Macaulay.
		\item dim $G/P=d$ for all minimal primes  $P$ of $G$.
	\end{enumerate}
\end{proposition}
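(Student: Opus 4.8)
The plan is to compare $G$ and $\widetilde{G}$ through their extended Rees algebras and to squeeze out both assertions from a single four-term exact sequence.

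First I would set up the extended Rees algebras $S=\bigoplus_{n\in\Z}\m^n t^n$ and $\widetilde{S}=\bigoplus_{n\in\Z}\widetilde{\m^n}t^n$ inside $A[t,t^{-1}]$ (with $\m^n=\widetilde{\m^n}=A$ for $n\le 0$), and write $u=t^{-1}$. Recall $S/uS=G$ and $\widetilde{S}/u\widetilde{S}=\widetilde{G}$, and that $u$ is a nonzerodivisor on both $S$ and $\widetilde{S}$ since multiplication by $u$ is invertible on the overring $A[t,t^{-1}]$. Because $\widetilde{\m^n}=\m^n$ for $n\gg0$ and each $\widetilde{\m^n}/\m^n$ has finite length, the quotient $T:=\widetilde{S}/S=\bigoplus_n(\widetilde{\m^n}/\m^n)t^n$ has finite length as an $A$-module; in particular $\widetilde{S}$ is module-finite over $S$, so $\widetilde{G}$ is module-finite over $G$, and $T$ is killed by a power of the graded maximal ideal while $u$ acts nilpotently on it (as $T$ is bounded).

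The key step is to apply the snake lemma to $0\to S\to\widetilde{S}\to T\to 0$ with the three vertical maps equal to multiplication by $u$. Since $u$ is a nonzerodivisor on $S$ and $\widetilde{S}$ and is nilpotent on $T$, the snake sequence collapses to
\[
0\to K\to G\xrightarrow{\ \phi\ }\widetilde{G}\to Q\to 0,
\]
where $K=\ker(u\mid_T)$ and $Q=T/uT$ are finite length $G$-modules and $\phi$, being induced by the ring inclusion $S\hookrightarrow\widetilde{S}$, is a finite homomorphism of graded rings. Splitting this as $0\to K\to G\to G'\to 0$ and $0\to G'\to\widetilde{G}\to Q\to 0$ with $G'=\image\phi$, I would chase the two long exact sequences in $H^\bullet_{\M}(-)$. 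As $\phi$ is finite we may compute local cohomology of $\widetilde{G}$ over $\M$, and Cohen--Macaulayness of $\widetilde{G}$ gives $H^i_{\M}(\widetilde{G})=0$ for $i<d$; since $K,Q$ have finite length, their higher local cohomology vanishes. The chase then shows $H^i_{\M}(G)$ has finite length for every $i<d$ (indeed $H^0_{\M}(G)\cong K$ and, for $d\ge 2$, $H^1_{\M}(G)\cong Q$ and $H^i_{\M}(G)=0$ for $2\le i\le d-1$), which is exactly the statement that $G$ is generalized \CM, proving (1).

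For (2) I would exploit that $\phi$ is finite with finite-length kernel and cokernel, so $G$, $G'$ and $\widetilde{G}$ agree after localizing away from $\M$. For $d\ge 1$ every minimal prime $P$ of $G$ satisfies $P\neq\M$, and $K=H^0_{\M}(G)$ lies in $P$ (any $x\in K$ is killed by some $m\in\M^n\setminus P$, forcing $x\in P$); hence the minimal primes of $G$ are precisely those of $G'=G/K$ with $\dim G/P=\dim G'/P$. The finite injection $G'\hookrightarrow\widetilde{G}$ preserves dimensions: by lying over, each minimal prime $P$ of $G'$ is the contraction of a minimal prime $\mathfrak{q}$ of $\widetilde{G}$ with $\dim G'/P=\dim\widetilde{G}/\mathfrak{q}$, and since $\widetilde{G}$ is \CM \ it is equidimensional, so $\dim\widetilde{G}/\mathfrak{q}=d$. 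Therefore $\dim G/P=d$ for every minimal prime $P$ of $G$. The main obstacle is the organizing step: producing the four-term sequence with the correct finiteness data (finite length of $K,Q$, module-finiteness of $\phi$, and that $\phi$ is a ring map); once $T$ is known to have finite length, parts (1) and (2) reduce to routine local-cohomology and lying-over arguments, the only delicate point being that $H^i_{\M}$ over $G$ agrees with $H^i_{\widetilde{\mathfrak{M}}}$ over $\widetilde{G}$, which holds because $\phi$ is finite.
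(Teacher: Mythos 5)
Your proof is correct, and it shares its backbone with the paper's: both arguments revolve around the four-term exact sequence $0 \to K \to G \to \widetilde{G} \to Q \to 0$ with $K,Q$ of finite length (the paper calls them $U,V$ and simply asserts the sequence as induced by the natural map $G \to \widetilde{G}$; you construct it honestly from $0 \to S \to \widetilde{S} \to T \to 0$ and the snake lemma for multiplication by $u=t^{-1}$, which is a clean, self-contained way to obtain exactly the finiteness data needed). Where you genuinely diverge is part (1): the paper localizes the sequence at primes $P \neq \M$ to get $G_P \cong \widetilde{G}_P$, hence $G$ is Cohen--Macaulay on the punctured spectrum, and concludes directly that $G$ is generalized Cohen--Macaulay. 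Strictly speaking that inference also needs equidimensionality: Cohen--Macaulayness on the punctured spectrum alone does not imply generalized Cohen--Macaulay (for instance $k[[x,y,z]]/(xy,xz)$, a plane union a line, is Cohen--Macaulay off the closed point, yet $H^1_{\m}$ fails to have finite length), so the paper's (1) implicitly leans on the equidimensionality established in (2). Your local-cohomology chase sidesteps this entirely: from the two short exact sequences you get $H^0_{\M}(G) \cong K$, $H^1_{\M}(G) \cong Q$ (for $d \geq 2$) and $H^i_{\M}(G) = 0$ for $2 \leq i \leq d-1$, which gives finite length of all $H^i_{\M}(G)$ with $i < d$, i.e.\ generalized Cohen--Macaulayness by definition --- a more airtight argument, at the modest cost of the independence-of-base point for $H^i_{\M}(\widetilde{G})$, which you correctly dispose of using finiteness of $\phi$. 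For part (2) the two proofs amount to the same reduction to equidimensionality of the Cohen--Macaulay ring $\widetilde{G}$, i.e.\ \cite[2.1.2(a)]{BH}: the paper identifies a minimal prime $P$ of $G$ with an associated prime of $\widetilde{G}$ via the isomorphism $G_P \cong \widetilde{G}_P$, while you run lying-over along the finite injection $G/K \hookrightarrow \widetilde{G}$; both are valid, and your observation that every minimal prime of $G$ contains $K = H^0_{\M}(G)$ is precisely what makes the passage from $G$ to $G/K$ harmless.
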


\begin{proof}
	(1) We note that $\widetilde{G}$ is finitely generated as a $G$-module. Furthermore  the natural map $G \rt \widetilde{G}$ induces an exact sequence of $G$-modules
	\[
	0 \rt U \rt G \rt \widetilde{G} \rt V \rt 0,
	\]
	where $U, V$ are of finite length. Let $\M$ be the maximal homogeneous ideal of $G$. If $P$ is a prime ideal in $G$ with $P \neq \M$ then $G_P \cong \widetilde{G}_P$. It  follows that $G_P$ is \CM \ for all $P \neq \M$. So $G$ is generalized \CM.
	
	(2) Let $P$ be a minimal prime of  $G$. Then $P \neq \M$. Furthermore $G_P \cong \widetilde{G}_P$. So $P$ is an associate prime of
	$\widetilde{G}$. The result now follows from \cite[2.1.2(a)]{BH}.
\end{proof}	
\begin{proposition}\label{greater}
	Let $(A,\mathfrak{m},k)$ be a Cohen-Macaulay local ring of dimension $d$ and suppose $e_2=e_1-e+1\neq0$, then type$(A)\geq e-h-1.$
\end{proposition}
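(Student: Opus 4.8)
The plan is to read the type off the Artinian reduction and then convert the numerical hypothesis into the single ideal-theoretic statement $\mathfrak{m}^3\subseteq J$. Enlarging the residue field via the flat extension $A\to A[X]_{\mathfrak{m}A[X]}$ (which changes neither the $e_i$, nor $h$, nor $\type A$), I choose an $A$-superficial sequence $x_1,\dots,x_d$ and put $J=(x_1,\dots,x_d)$. Since $A$ is \CM\ and $J$ is a minimal reduction of $\mathfrak{m}$, the $x_i$ form a maximal $A$-regular sequence, so $\ov A:=A/J$ is Artinian with $\ell(\ov A)=e$, and because $\Ext^d_A(k,A)\cong\Hom_{\ov A}(k,\ov A)=\soc(\ov A)$ we have $\type A=\dim_k\soc(\ov A)$. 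Writing $\ov{\mathfrak{m}}=\mathfrak{m}\ov A$, the terms of degree $0$ and $1$ in the Hilbert function of $\ov A$ contribute $\ell(\ov A/\ov{\mathfrak{m}})=1$ and $\ell(\ov{\mathfrak{m}}/\ov{\mathfrak{m}}^2)=h$ (the $d$ generators of $J$ form part of a minimal basis of $\mathfrak{m}$), so $\ell_{\ov A}(\ov{\mathfrak{m}}^2)=e-h-1$.

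Now I claim it suffices to prove $\mathfrak{m}^3\subseteq J$, equivalently $\ov{\mathfrak{m}}^3=0$. Indeed, if $\ov{\mathfrak{m}}^3=0$ then $\ov{\mathfrak{m}}^2$ is annihilated by $\ov{\mathfrak{m}}$, hence is a $k$-subspace of $\soc(\ov A)=(0:_{\ov A}\ov{\mathfrak{m}})$ with $\dim_k\ov{\mathfrak{m}}^2=\ell_{\ov A}(\ov{\mathfrak{m}}^2)=e-h-1$. Therefore $\type A=\dim_k\soc(\ov A)\ge e-h-1$, which is exactly the asserted inequality. Thus the whole statement collapses to the vanishing $\ov{\mathfrak{m}}^3=0$.

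To obtain $\mathfrak{m}^3\subseteq J$ I would descend to dimension two, where Lemma \ref{sigma} applies. For $d\ge 2$ set $A''=A/(x_1,\dots,x_{d-2})$; this is a two-dimensional \CM\ ring in which $\ov{x}_{d-1},\ov{x}_d$ is again a superficial sequence with minimal reduction $J''$, and $A''/J''=\ov A$. Iterating \ref{mod-sup} shows that killing $d-2$ superficial elements preserves $e_0,e_1,e_2$, so $e_2=e_1-e+1$ persists in $A''$. Applying Lemma \ref{sigma} to $A''$ with $\sigma_n=\ell(\widetilde{\mathfrak{m}_{A''}^{n+1}}/J''\mathfrak{m}_{A''}^n)$, and using $\sigma_0=\ell(\mathfrak{m}_{A''}/J'')=e-1$ (because $\widetilde{\mathfrak{m}_{A''}}=\mathfrak{m}_{A''}$ and $\ell(A''/J'')=e$), one computes $e_2-(e_1-e+1)=\sum_{n\ge2}(n-1)\sigma_n$. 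As every $\sigma_n\ge0$, the hypothesis forces $\sigma_n=0$ for all $n\ge2$; in particular $\widetilde{\mathfrak{m}_{A''}^3}=J''\mathfrak{m}_{A''}^2$, whence $\mathfrak{m}_{A''}^3\subseteq J''$ and $\ov{\mathfrak{m}}^3=0$. The low-dimensional cases follow immediately from the one-dimensional facts: there $e_2=e_1-e+1$ forces $s\le 2$, so $\mathfrak{m}^3=x\mathfrak{m}^2\subseteq J$ when $d=1$ and $\mathfrak{m}^3=0$ when $d=0$.

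I expect the genuine obstacle to be precisely this cubic vanishing, that is, translating the equality $e_2=e_1-e+1$ into $\mathfrak{m}^3\subseteq J$. The leverage comes from the nonnegativity of the Itoh invariants $\sigma_n$ together with the exact value $\sigma_0=e-1$; one must also verify carefully that the two-dimensional reduction $A''$ records the same $e,e_1,e_2,h$ and satisfies $A''/J''=\ov A$, so that the vanishing proved in $A''$ really is the vanishing $\ov{\mathfrak{m}}^3=0$ needed upstairs. Once $\ov{\mathfrak{m}}^3=0$ is secured, the socle inclusion and the length count $\ell_{\ov A}(\ov{\mathfrak{m}}^2)=e-h-1$ close the argument with no further computation.
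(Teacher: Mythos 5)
Your proposal is correct and is essentially the paper's own argument: both reduce to dimension two via a superficial sequence that preserves $e_0,e_1,e_2$, use Lemma \ref{sigma} (with $\sigma_0=e-1$ and $\sigma_n\geq 0$) to force $\sigma_n=0$ for all $n\geq 2$, hence $\mathfrak{m}^3\subseteq J$ in the reduction, and then bound the type below by $\ell(\overline{\mathfrak{m}}^2)=e-h-1$ using the socle of the Artinian quotient $A/J$. The differences are purely organizational: the paper runs the cases $d=0,1,2$ and $d\geq 3$ separately and includes a (not actually needed) check that the square of the maximal ideal survives in the Artinian reduction, while you unify $d\geq 2$ into one descent and spell out the identity $e_2-(e_1-e+1)=\sum_{n\geq 2}(n-1)\sigma_n$ that the paper leaves implicit.
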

\begin{proof}
	We will use induction to prove this result.
	Now for $d=0$ we know that $s\leq2$ see\cite[Prop 6.21]{V}.  So $\mathfrak{m}^3=0$.  Therefore  $\mathfrak{m}^2 \subseteq $ Soc$(A)$. Furthermore notice  $\mathfrak{m}^2\neq0$ because if $\mathfrak{m}^2=0$ then $A$ has minimal multiplicity and this is not possible as $e_2\neq0$. So $\mathfrak{m}^2\neq0$ and  as $\ell(\mathfrak{m}^2)=e-h-1$ we get $\type A\geq e-h-1$.

	Now assume $d\geq1$. Let  $x_1,\ldots,x_d$ be an $A$-superficial sequence. Set $J=(x_1,\ldots,x_d)$ and $(C,\mathfrak{q})=(A/J,\mathfrak{m}/J)$.\\
	For $d=1$,  in this case $\rho_2 = \ell({\mathfrak{m}}^3/{x_1\mathfrak{m}^2}) =0$, so $G(A)$ is Cohen-Macaulay. So $e_i(A)=e_i(C)$ for all $i\geq 0$. Now from $d=0 $ case, $\type A = \type C\geq e-h-1$.\\
	For $d=2$ in this case from Lemma \ref{sigma}, we have $\sigma_{2}=0$ i.e. $\widetilde{\mathfrak{m}^3}=J\widetilde{\mathfrak{m}^2}$. So $\mathfrak{q}^3=0$. In this case  $\mathfrak{q}^2\neq0$, because if  $\mathfrak{q}^2=0$ then $C$ has minimal multiplicity and this gives $A$ has minimal multiplicity which is not possible as $e_2(A)\neq0$. Now since $\mathfrak{q}^2\subseteq$ Soc$(C)$ and type$(C)=$ type$(A)$, so type$(A)\geq e-h-1$.\\ Now for $d\geq3$, set $(B,\mathfrak{n})=(A/(x_1,\ldots,x_{d-2}),\mathfrak{m}/(x_1,\ldots,x_{d-2}))$ then
	
	 $e_i(A)=e_i(B)$ for $i=0,1,2$ and type$(A)=$ type$(B)$\\ So by  $d=2$ case type$(A)\geq e-h-1$.
	\end{proof}
\section{$\mathbf{type(A)= e-h-1}$}
In this section we prove that if  $e_2=e_1-e+1\neq0$ .
	and if type$(A)=e-h-1$, then $G(A)$ is Cohen-Macaulay.

\s \label{incl} Let $x$ be $A$-superficial. Set $(B,\n) = (A/(x), \m/(x))$.
When $\dim B > 0$  that we have an inclusion (see \cite[2.9]{Pu1})
\[
0 \rt \frac{\widetilde{\m^2}}{\m^2} \rt \frac{\widetilde{\n^2}}{\n^2}. \tag{*}
\]
If $\dim B = 0$ then we have an obvious map
\[
\phi \colon \frac{\widetilde{\m^2}}{\m^2} \rt \frac{\n}{\n^2}. \tag{**}
\]
We show $\phi$ is an inclusion. Indeed, if $\phi(\overline{a}) = 0$ then $a = b + rx$ for some $b \in \m^2$. Thus $a-b \in \widetilde{\m^2}$. So $r \in (\widetilde{\m^2} \colon x) = \m$. It follows that $a \in \m^2$.

\begin{thm}\label{A}
Let $(A,\mathfrak{m},k)$ be a Cohen-Macaulay local ring with dimension $d$ and suppose $e_2=e_1-e+1\neq0$ .
	If type$(A)=e-h-1$, then $G(A)$ is Cohen-Macaulay.
\end{thm}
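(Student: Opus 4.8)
The plan is to induct on the dimension $d$, the real content being the two-dimensional case. By the Remark we may assume the residue field is infinite. For $d=0$ the ring $G(A)$ is zero-dimensional and hence automatically \CM, while for $d=1$ the hypothesis $e_2=e_1-e+1$ already forces $G(A)$ to be \CM\ by the dimension-one fact recorded in the preliminaries; note the type hypothesis is not even needed in these two cases.

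The heart of the matter is $d=2$. I would fix an $A$-superficial sequence $y_1,y_2$ and put $J=(y_1,y_2)$, $C=A/J$, $\mathfrak{q}=\mathfrak{m}/J$. First, a short computation with Lemma \ref{sigma}: since $\sigma_0=\ell(\mathfrak{m}/J)=e-1$ and $e_1-e_2=e-1$, the identity $e_1-e_2=\sigma_0-\sum_{n\geq2}(n-1)\sigma_n$ forces $\sigma_n=0$ for all $n\geq2$, so that $\widetilde{\mathfrak{m}^{n+1}}=J\mathfrak{m}^n=\mathfrak{m}^{n+1}$ for $n\geq2$; in particular $\widetilde{\mathfrak{m}^3}=\mathfrak{m}^3$ and $\mathfrak{q}^3=0$. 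Counting lengths via $\ell(C)=e$ gives $\ell(\mathfrak{q}^2)=e-h-1$, and since $\mathfrak{q}^3=0$ we have $\mathfrak{q}^2\subseteq\soc C$; as $\type(A)=\type(C)=\ell(\soc C)=e-h-1$, we conclude $\soc C=\mathfrak{q}^2$ \emph{exactly}. Now I would show $\widetilde{\mathfrak{m}^2}=\mathfrak{m}^2$. Suppose not and choose $a\in\widetilde{\mathfrak{m}^2}\setminus\mathfrak{m}^2$. Then $a\mathfrak{m}\subseteq\widetilde{\mathfrak{m}^3}=\mathfrak{m}^3\subseteq J$, so the image $\overline{a}$ satisfies $\overline{a}\,\mathfrak{q}=0$, i.e.\ $\overline{a}\in\soc C=\mathfrak{q}^2$; but $\overline{a}\in\mathfrak{q}^2$ means $a=m+j$ with $m\in\mathfrak{m}^2$ and $j\in J$, whence $j=a-m\in\widetilde{\mathfrak{m}^2}\cap J=J\mathfrak{m}\subseteq\mathfrak{m}^2$ and so $a\in\mathfrak{m}^2$, a contradiction. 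Hence $\widetilde{\mathfrak{m}^2}=\mathfrak{m}^2$, and combined with the above this gives $\widetilde{\mathfrak{m}^n}=\mathfrak{m}^n$ for every $n$, so $\depth G(A)>0$. Lemma \ref{dep02} then upgrades this to $\depth G(A)=2$, i.e.\ $G(A)$ is \CM.

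For $d\geq3$ I would reduce to the two-dimensional case. Choosing an $A$-superficial sequence $x_1,\ldots,x_d$, set $B=A/(x_1,\ldots,x_{d-2})$, a \CM\ ring of dimension two. Since at most $d-2$ superficial elements are killed, $e,e_1,e_2$ are preserved, so $e_2(B)=e_1(B)-e(B)+1\neq0$; also $h(B)=h$, and because $x_1,\ldots,x_{d-2}$ is a regular sequence one has $\type(B)=\type(A)=e-h-1=e(B)-h(B)-1$. Thus $B$ satisfies the hypotheses in dimension two and $G(B)$ is \CM\ by the previous paragraph. To lift this, note $x_{d-1}^*$ is $G(B)$-regular (a superficial element of a \CM\ graded ring), so by \ref{mod-sup} $G(B/(x_{d-1}))=G(B)/(x_{d-1}^*)$ is \CM\ of dimension one; in particular $\depth G(A/(x_1,\ldots,x_{d-1}))\geq1$. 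Sally descent applied to $x_1,\ldots,x_{d-1}$ then gives $\depth G(A)\geq d$, so $G(A)$ is \CM.

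I expect the main obstacle to be the dimension-two step, and specifically the implication $\soc C=\mathfrak{q}^2\Rightarrow\widetilde{\mathfrak{m}^2}=\mathfrak{m}^2$: the whole idea is to translate the extremal type hypothesis into the statement that the Artinian reduction $C$ has no socle below degree two, and then to observe that any $a\in\widetilde{\mathfrak{m}^2}\setminus\mathfrak{m}^2$ would produce precisely such a forbidden socle element. A secondary technical point is to ensure the reduction to dimension two preserves the type---which rests on the regularity of the superficial sequence---and that the final descent recovers the \emph{full} depth $d$ rather than merely a positive depth, which is why it must be applied to the length-$(d-1)$ sequence all at once rather than one element at a time.
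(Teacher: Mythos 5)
Your argument is correct in outline and, in the crucial two-dimensional step, genuinely different from the paper's. The paper passes to the one-dimensional quotient $B=A/(x_1)$ and uses the injections of \ref{incl} (which rest on \cite[2.9]{Pu1}) together with $\ov{\widetilde{\m^3}}=\widetilde{\n^3}$ from \cite[Prop.\ 4.4]{Pu2} to conclude first $\widetilde{\n^2}=\n^2$ and then $\widetilde{\m^2}=\m^2$; you instead work directly in the Artinian reduction $C=A/J$, and the contradiction you extract from an element $a\in\widetilde{\m^2}\setminus\m^2$ rests only on two elementary facts already recorded in the preliminaries: $\widetilde{\m^2}\,\widetilde{\m}\subseteq\widetilde{\m^3}$ and $\widetilde{\m^2}\cap J=J\m$. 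Both proofs pivot on the same length count $\ell(\q^2)=e-h-1=\dim_k\soc(C)$, but your route avoids the external inputs from \cite{Pu1} and \cite{Pu2} entirely, so it is more self-contained; the paper's route via $B$ is the template that gets reused in the later analysis of the case $\type(A)=e-h$ (Theorem \ref{d=2}). Your treatment of $d\geq 3$ is the same reduction as the paper's, and in fact more careful: you make explicit how Sally descent applied to the sequence $x_1,\ldots,x_{d-1}$ returns the full depth $d$, a point the paper leaves implicit.

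One repair is needed. You quote Lemma \ref{sigma} in its literal form $\sigma_n=\ell(\widetilde{\m^{n+1}}/J\m^n)$ and conclude $\widetilde{\m^{n+1}}=J\m^n=\m^{n+1}$ for $n\geq 2$. That literal form is a typo in the paper: Itoh's theorem \cite{It}, and the paper's own use of it (both in the proof of this theorem and in Section 4.1), takes $\sigma_n=\ell(\widetilde{\m^{n+1}}/J\widetilde{\m^n})$. With the correct statement, $\sigma_n=0$ for $n\geq2$ gives only $\widetilde{\m^{n+1}}=J\widetilde{\m^n}$, not $\widetilde{\m^{n+1}}=\m^{n+1}$; the stronger assertion is false under $e_2=e_1-e+1$ alone --- in Wang's example \cite[3.10]{CPR} one has $\widetilde{\m^3}=\m^3$ while $\ell(\m^3/J\m^2)=2$, so $\widetilde{\m^3}\neq J\m^2$. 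Fortunately your argument survives: all you actually use is $\widetilde{\m^3}\subseteq J$ (hence $\q^3=0$, and $\ov{a}\in\soc(C)$ for $a\in\widetilde{\m^2}$), and this follows from $\widetilde{\m^3}=J\widetilde{\m^2}$; at the end, $\widetilde{\m^2}=\m^2$ together with $\widetilde{\m^{n+1}}=J\widetilde{\m^n}$ for $n\geq2$ gives $\widetilde{\m^n}=\m^n$ for all $n$ by an easy induction, so $\depth G(A)>0$ and Lemma \ref{dep02} finishes as you say. The proof is sound after this cosmetic change.
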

\begin{proof} We will prove the theorem by induction on dimension $d$.\\
	If dim $A=0$ then  $G$ is Cohen-Macaulay.\\ Let  $x_1,\ldots,x_d$ be an $A$ superficial sequence. Set $J=(x_1,\ldots,x_d)$. \\ Now if $d=1$ then  since $e_2=e_1-e+1$ so $\rho_2 = \ell({\mathfrak{m}}^3/{x_1\mathfrak{m}^2}) =0$ this implies $G$ is Cohen-Macaulay.\\ Now let $d=2$, set $(B,\mathfrak{n})=(A/(x_1),\mathfrak{m}/(x_1))$ and $(C,\mathfrak{q})=(A/(x_1,x_2),\mathfrak{m}/(x_1,x_2))$
	Since $e_2=e_1-e+1$, so from [Lemma \ref{sigma}], we have $\sigma_{2}=0$ i.e. $\widetilde{\mathfrak{m}^3}=J\widetilde{\mathfrak{m}^2}$, so we get $\mathfrak{q}^3=0$.\\ Now from \cite[Prop.4.4]{Pu2} we have $\overline{\widetilde{\mathfrak{m}^3}}=\widetilde{\mathfrak{n}^3}$, so  $\widetilde{\mathfrak{m}^3}={J\widetilde{\mathfrak{m}^2}}$ gives $\widetilde{\mathfrak{n}^3}={\bar{J}\widetilde{\mathfrak{n}^2}}$. So, $\mathfrak{n}.\widetilde{\mathfrak{n}^2}\subseteq \widetilde{\mathfrak{n}^3}\subseteq (x_2)B$.
	
	 This will give us , $\overline{\mathfrak{n}.\widetilde{\mathfrak{n}^2}} = \mathfrak{q}\overline{\widetilde{\mathfrak{n}^2}}=0$ so $\overline{\widetilde{\mathfrak{n}^2}}\subseteq Soc(C)$.\\
	So we get that, $$\mathfrak{q}^2\subseteq \overline{\widetilde{\mathfrak{n}^2}}\subseteq Soc(C).$$   Now we have $\ell(\mathfrak{q}^2)=e-h-1$  and dim$_kSoc(C)=e-h-1$, So $\mathfrak{q}^2 =\overline{\widetilde{\mathfrak{n}^2}}$.

By (**) we get $\widetilde{\n^2} = \n^2$ and so by (*) we get $\widetilde{\m^2} = \m^2$. 	
	 Now since  $\sigma_n=0$ for all $n\geq2$, so $\widetilde{\mathfrak{m}^i}=\mathfrak{m}^i$  for all $i\geq1$. This implies depth $G(A)>0$, So from [Lemma\ref{dep02}],\hspace{2mm}  depth $G(A)=2$ so $G(A)$ is Cohen-Macaulay.

 Now if $d\geq3$, Set $(B,\mathfrak{n})=(A/(x_1,\ldots,x_{d-2}),\mathfrak{m}/(x_1,\ldots,x_{d-2}))$ then $B$ is Cohen-Macaulay ring of dimension 2 with $e_2(B)=e_1(B)-e(B)+1$ and \\ type$(B)$=type$(A)=e-h-1$, so $G(B)$ is Cohen-Macaulay and by Sally-descent $G(A)$ is Cohen-Macaulay.

\end{proof}

\begin{remark}
	If $(A,\mathfrak{m})$ is a Cohen-Macaulay local ring of dimension $d$, $e_2=e_1-e+1 \neq 0$ and type $A= e-h-1$ then $$H_A(z)=\frac{1+hz+(e-h-1)z^2}{(1-z)^d}.$$
\end{remark}

\section{$\mathbf{type(A) = e-h} $ }
In this section we describe our results when $e_2 = e_1 - e + 1 \neq 0$ and  $\type(A) = e - h $. We first discuss the case when $\dim A = 2$.
\begin{thm}\label{d=2}
	Let $(A,\mathfrak{m})$ be a Cohen-Macaulay local ring of dimension two with type$(A)=e-h$ and $x,y$ be $A$-superficial sequence. Set $J=(x,y)$. Suppose $e_2=e_1-e+1\neq0$  then
	\begin{enumerate}[\rm (1)]
		\item depth $G(A)=0$ or $2$.
	
		\item If depth $G(A)=0$ then
	
		\begin{enumerate}[\rm (a)]
		
			\item $\widetilde{\mathfrak{m}^{n+1}}=J\widetilde{\mathfrak{m}^n}$ for all $n\geq2$.
		
			\item $\widetilde{G}(A)$ is Cohen-Macaulay.
		
			\item $\widetilde{\mathfrak{m}^2} \neq \mathfrak{m}^2$ and  $\ell({\widetilde{\mathfrak{m}^2}}/{\mathfrak{m}^2})=1$.
		
			\item $\widetilde{\mathfrak{m}^j}=\mathfrak{m}^j$ for all $j\geq3$ and $\ell({\mathfrak{m}^3/{J\mathfrak{m}^2}})=2$.
		\end{enumerate}
	\end{enumerate}
\end{thm}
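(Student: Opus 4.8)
The plan is to treat (1) as immediate and to reduce everything else to the single input $\sigma_n=0$ for $n\ge 2$. Part (1) is exactly Lemma~\ref{dep02}, so assume $\depth G(A)=0$. With $\sigma_n=\ell(\widetilde{\mathfrak{m}^{n+1}}/J\widetilde{\mathfrak{m}^n})$ as in Lemma~\ref{sigma}, one has $\sigma_0=\ell(\mathfrak{m}/J)=e-1$, hence $\sum_{n\ge1}\sigma_n=e_1-e+1=e_2$ and $\sum_{n\ge1}n\sigma_n=e_2$; subtracting gives $\sum_{n\ge1}(n-1)\sigma_n=0$, so $\sigma_n=0$ for all $n\ge2$. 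This is precisely (a): $\widetilde{\mathfrak{m}^{n+1}}=J\widetilde{\mathfrak{m}^n}$ for $n\ge2$. In particular $\mathfrak{m}^3\subseteq\widetilde{\mathfrak{m}^3}=J\widetilde{\mathfrak{m}^2}\subseteq J$, so in $C=A/J$ we get $\mathfrak{q}^3=0$ for $\mathfrak{q}=\mathfrak{m}/J$, with $\ell(C)=e$, $\ell(\mathfrak{q}^2)=e-h-1$ and $\dim_k\soc(C)=\type(C)=\type(A)=e-h$.

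For (c) I would set $(B,\mathfrak{n})=(A/(x),\mathfrak{m}/(x))$, so $C=B/(\overline y)$. Using $\overline{\widetilde{\mathfrak{m}^3}}=\widetilde{\mathfrak{n}^3}$ (\cite[Prop.4.4]{Pu2}) together with (a) gives $\widetilde{\mathfrak{n}^3}=\overline J\widetilde{\mathfrak{n}^2}$, whence $\mathfrak{n}\widetilde{\mathfrak{n}^2}\subseteq(\overline y)\widetilde{\mathfrak{n}^2}$ and therefore $\overline{\widetilde{\mathfrak{n}^2}}\subseteq\soc(C)$. Combined with $\mathfrak{q}^2\subseteq\overline{\widetilde{\mathfrak{n}^2}}$ this yields the chain $\mathfrak{q}^2\subseteq\overline{\widetilde{\mathfrak{n}^2}}\subseteq\soc(C)$ in which the outer terms have lengths $e-h-1$ and $e-h$. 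Since $\widetilde{\mathfrak{n}^2}\cap(\overline y)=(\overline y)\mathfrak{n}$ one has $\ell(\widetilde{\mathfrak{n}^2}/\mathfrak{n}^2)=\ell(\overline{\widetilde{\mathfrak{n}^2}}/\mathfrak{q}^2)\le1$, and then the inclusions $(*)$ and $(**)$ of \ref{incl} give $\ell(\widetilde{\mathfrak{m}^2}/\mathfrak{m}^2)\le\ell(\widetilde{\mathfrak{n}^2}/\mathfrak{n}^2)\le1$. Finally, if $\widetilde{\mathfrak{m}^2}=\mathfrak{m}^2$ then (a) propagates $\widetilde{\mathfrak{m}^j}=\mathfrak{m}^j$ to all $j$, forcing $\depth G(A)>0$; so $\widetilde{\mathfrak{m}^2}\neq\mathfrak{m}^2$ and $\ell(\widetilde{\mathfrak{m}^2}/\mathfrak{m}^2)=1$, which is (c). (A byproduct, obtained by computing $\ell\big((\widetilde{G}/(x^*,y^*))_1\big)$ in two ways, is $e_2=e-h$; this will feed the $h$-polynomial.)

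For (b) I would verify that $x^*,y^*$ is a regular sequence on $\widetilde{G}=\widetilde{G}(A)$ via the Valabrega--Valla criterion for the Ratliff--Rush filtration, i.e.\ $J\cap\widetilde{\mathfrak{m}^{n+1}}=J\widetilde{\mathfrak{m}^n}$ for all $n\ge0$: for $n=0$ this is $J\subseteq\mathfrak{m}$, for $n\ge2$ it is immediate from (a) since then $\widetilde{\mathfrak{m}^{n+1}}\subseteq J$, and for $n=1$ it is exactly the identity $\widetilde{\mathfrak{m}^2}\cap J=J\mathfrak{m}$ recorded before Lemma~\ref{sigma}; hence $\widetilde{G}$ is Cohen--Macaulay. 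Turning to (d), write $\widetilde{\mathfrak{m}^2}=\mathfrak{m}^2+Aw$ with $w\in\widetilde{\mathfrak{m}^2}\setminus\mathfrak{m}^2$. I would first establish $\ell(\widetilde{\mathfrak{m}^3}/J\mathfrak{m}^2)=2$. Since $\widetilde{\mathfrak{m}^3}=J\widetilde{\mathfrak{m}^2}=x\widetilde{\mathfrak{m}^2}+y\widetilde{\mathfrak{m}^2}$, the map $(\overline a,\overline b)\mapsto xa+yb$ gives a surjection $(\widetilde{\mathfrak{m}^2}/\mathfrak{m}^2)^2\cong k^2\twoheadrightarrow\widetilde{\mathfrak{m}^3}/J\mathfrak{m}^2$, whose injectivity amounts to: $(\alpha x+\beta y)w\in J\mathfrak{m}^2$ forces $\alpha=\beta=0$. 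This I would prove inside $\widetilde{G}$: the class $[w]\in\widetilde{G}_2$ is nonzero and avoids the image $\overline{\mathfrak{m}^2}$ of $\mathfrak{m}^2$, the image of $J\mathfrak{m}^2$ in $\widetilde{G}_3$ is $(x^*,y^*)\overline{\mathfrak{m}^2}$, and the relation $(\alpha x^*+\beta y^*)[w]\in(x^*,y^*)\overline{\mathfrak{m}^2}$ together with the Koszul syzygy of the regular sequence $x^*,y^*$ forces $\alpha[w],\beta[w]\in\overline{\mathfrak{m}^2}$, i.e.\ $\alpha=\beta=0$.

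It then remains to split $\ell(\widetilde{\mathfrak{m}^3}/J\mathfrak{m}^2)=\ell(\widetilde{\mathfrak{m}^3}/\mathfrak{m}^3)+\ell(\mathfrak{m}^3/J\mathfrak{m}^2)=2$ by proving $\widetilde{\mathfrak{m}^3}=\mathfrak{m}^3$; once this holds, $\ell(\mathfrak{m}^3/J\mathfrak{m}^2)=2$ follows, and (a) together with $\widetilde{\mathfrak{m}^{j}}\subseteq\mathfrak{m}^{j-1}$ bootstraps $\widetilde{\mathfrak{m}^{j}}=\mathfrak{m}^{j}$ for all $j\ge3$, giving the rest of (d). Equivalently one must show $xw,yw\in\mathfrak{m}^3$, and I expect this vanishing of the Ratliff--Rush defect in degree $3$ to be \emph{the main obstacle}: the facts in hand give only $\mathfrak{m}w\subseteq J\cap\mathfrak{m}^2=J\mathfrak{m}$ and $\mathfrak{m}^2w\subseteq J\mathfrak{m}^2\subseteq\mathfrak{m}^3$, which is not by itself enough. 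My plan to close it is to feed the exact sequence of Lemma~\ref{new}, $0\to(\mathfrak{m}^3\colon x)/\mathfrak{m}^2\to\mathfrak{m}^3/J\mathfrak{m}^2\to\mathfrak{n}^3/\overline J\mathfrak{n}^2\to0$, into the dimension-one analysis of $B$: there $\widetilde{G}(B)$ is Cohen--Macaulay with $h$-vector $(1,h-1,e-h)$, so $e_2(B)=e-h+1+\sum_{j\ge3}\ell(\widetilde{\mathfrak{n}^j}/\mathfrak{n}^j)$, and $\widetilde{\mathfrak{m}^3}=\mathfrak{m}^3$ is equivalent to $e_2(B)$ attaining its minimal value $e-h+1$. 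Itoh's inequality \cite{It} only yields $e_2(B)\ge e-h$, so the decisive step is an \emph{upper} bound pinning $e_2(B)=e-h+1$; I would attempt this by exploiting $\type(B)=e-h$ (a Rossi--Valla type estimate) or, equivalently, by showing directly that $w^*$ is a genuine socle element of $G(A)$, i.e.\ $\mathfrak{m}w\subseteq\mathfrak{m}^3$. Finally, once $\widetilde{G}$ is Cohen--Macaulay, Proposition~\ref{GCM} records the generalized Cohen--Macaulayness used in the higher-dimensional companion statement.
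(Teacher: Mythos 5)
Your parts (1), (2a), (2b) and (2c) are correct and essentially reproduce the paper's own argument: (1) is Lemma \ref{dep02}; (a) is the vanishing $\sigma_n=0$ for $n\geq 2$ extracted from Lemma \ref{sigma}; (b) is the Valabrega--Valla type criterion \cite[Prop 3.5]{HM}, checked exactly as you do from $J\cap\widetilde{\mathfrak{m}^2}=J\mathfrak{m}$ and (a); and (c) is the paper's socle computation in $C=A/J$, resting on $\overline{\widetilde{\mathfrak{m}^3}}=\widetilde{\mathfrak{n}^3}$ (\cite[Prop.4.4]{Pu2}) and the inclusions of \ref{incl}. Your Koszul-syzygy argument inside $\widetilde{G}(A)$ proving $\ell(\widetilde{\mathfrak{m}^3}/J\mathfrak{m}^2)=2$ is a legitimate alternative to the paper's count (the paper instead writes $\widetilde{\mathfrak{m}^3}=J\mathfrak{m}^2+aJ$ to get the bound $\leq 2$ and then excludes the values $0$ and $1$).

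The step you yourself flag as ``the main obstacle'' is, however, a genuine gap, and none of your proposed routes closes it: showing $\mathfrak{m}w\subseteq\mathfrak{m}^3$ is, as you observe, equivalent to $\widetilde{\mathfrak{m}^3}=\mathfrak{m}^3$, so that route is circular; Itoh's inequality points the wrong way; and the ``Rossi--Valla type estimate'' on $e_2(B)$ is never carried out (nor is it clear how type$(B)$ would give an upper bound). The idea you are missing is to run the implication in the opposite direction: instead of trying to kill the Ratliff--Rush defect first, the paper converts the hypothesis $\depth G(A)=0$ into a \emph{lower} bound on $\ell(\mathfrak{m}^3/J\mathfrak{m}^2)$. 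Indeed, if $\ell(\mathfrak{m}^3/J\mathfrak{m}^2)=0$ then $\mathfrak{m}^{n+1}=J\mathfrak{m}^n$ for all $n\geq 2$, and combined with $J\cap\mathfrak{m}^2\subseteq J\cap\widetilde{\mathfrak{m}^2}=J\mathfrak{m}$ the Valabrega--Valla criterion makes $G(A)$ Cohen-Macaulay; and if $\ell(\mathfrak{m}^3/J\mathfrak{m}^2)=1$ then $\depth G(A)>0$ by Huckaba's theorem \cite[Cor 2.7]{H}. Both contradict $\depth G(A)=0$, so $\ell(\mathfrak{m}^3/J\mathfrak{m}^2)\geq 2$. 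Feeding this into the exact sequence
\[
0\rightarrow \frac{\mathfrak{m}^3}{J\mathfrak{m}^2}\rightarrow \frac{\widetilde{\mathfrak{m}^3}}{J\mathfrak{m}^2}\rightarrow \frac{\widetilde{\mathfrak{m}^3}}{\mathfrak{m}^3}\rightarrow 0
\]
together with your equality $\ell(\widetilde{\mathfrak{m}^3}/J\mathfrak{m}^2)=2$ forces $\ell(\mathfrak{m}^3/J\mathfrak{m}^2)=2$ and $\widetilde{\mathfrak{m}^3}=\mathfrak{m}^3$ simultaneously, after which your bootstrap yields $\widetilde{\mathfrak{m}^j}=\mathfrak{m}^j$ for all $j\geq 3$. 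In short, the missing ingredient is precisely the depth-from-length input (Huckaba's result, plus the reduction-number-two case of Valabrega--Valla), which is the engine of the paper's proof of (d).
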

\begin{proof}

	Set $(B,\mathfrak{n})=(A/(x),\mathfrak{m}/(x))$ and $(C,\mathfrak{q})=(A/J,\mathfrak{m}/J)$.
	
	\begin{enumerate}
		\item This is Lemma \ref{dep02}.
	
		\item Assume depth $G(A)=0$
	
		\begin{enumerate}
			\item
		 Since $e_2=e_1-e+1$ we get $\sigma_{i}=0$ for all $i\geq2$ from [Lemma\ref{sigma}]. So $\widetilde{\mathfrak{m}^{n+1}}=J\widetilde{\mathfrak{m}^n}$ for all $n\geq2$.

	\item Since  $\widetilde{\mathfrak{m}^{n+1}}\cap J = J\widetilde{\mathfrak{m}^n}$ for all $n\geq1$. So $\widetilde{G}(A)$ is Cohen-Macaulay (see\cite[Prop 3.5]{HM}).

	\item If $\widetilde{\mathfrak{m}^2}=\mathfrak{m}^2$ then from (a), $\widetilde{\mathfrak{m}^i}=\mathfrak{m}^i$ for all $i\geq1$. This gives depth $G(A)>0$, which is contradiction. So $\widetilde{\mathfrak{m}^2}\neq \mathfrak{m}^2$. \\
	 We know that $\mathfrak{q}^2 \subseteq \overline{\widetilde{\mathfrak{n}^2}}\subseteq Soc(C)$. Furthermore $\ell(\mathfrak{q}^2)=e-h-1$ and dim$_k(Soc(C))=e-h$.
Consider the map $\phi \colon \widetilde{\n^2}/\n^2 \rt \q/\q^2$ from \ref{incl}(ii). It follows that the image of $\phi$ lands in $Soc(C)/\q^2$ and this has length $= 1$. So it follows that
$\ell(\widetilde{\n^2}/\n^2) \leq 1$. Again from \ref{incl} it follows that $\ell(\widetilde{\m^2}/\m^2) \leq 1$. As $\widetilde{\m^2} \neq \m^2$ we get that $\ell(\widetilde{\m^2}/\m^2) = 1$.

	 \item  Now since $\ell(\widetilde{\mathfrak{m}^2}/{\mathfrak{m}^2})=1$, so $\widetilde{\mathfrak{m}^2}=\mathfrak{m}^2+(a)$ where $a\notin \mathfrak{m}^2$. From(a), $\widetilde{\mathfrak{m}^3}=J\widetilde{\mathfrak{m}^2}$ so we have $$\widetilde{\mathfrak{m}^3}=J(\mathfrak{m}^2+(a))=J\mathfrak{m}^2+aJ\subseteq \mathfrak{m}^3+aJ$$
	
	this will give us $$\ell(\widetilde{\mathfrak{m}^3}/{\mathfrak{m}^3})\leq2 \quad \text{and} \quad \ell(\widetilde{\mathfrak{m}^3}/{J\mathfrak{m}^2})\leq2$$  Now
	$\ell(\widetilde{\mathfrak{m}^3}/{J\mathfrak{m}^2})\neq0$ otherwise $\mathfrak{m}^3=J\mathfrak{m}^2$ which is not possible as from our assumption depth $G=0$.\\ Now if $\ell(\widetilde{\mathfrak{m}^3}/{J\mathfrak{m}^2})=1$ then $\ell(\mathfrak{m}^3/{J\mathfrak{m}^2})=0$ or $1$.
	Now as $\ell(\mathfrak{m}^3/{J\mathfrak{m}^2})=0$ implies  $G(A)$ is Cohen-Macaulay and $\ell(\mathfrak{m}^3/{J\mathfrak{m}^2})=1$ implies depth of  $G(A)$ is positive from\cite[Cor 2.7]{H}. So $\ell(\widetilde{\mathfrak{m}^3}/{J\mathfrak{m}^2})\neq1$. \\
	Therefore only possibility is $\ell(\widetilde{\mathfrak{m}^3}/{J\mathfrak{m}^2})=2$.
	And from short exact sequence $$0\rightarrow \frac{\mathfrak{m}^3}{J\mathfrak{m}^2}\rightarrow \frac{\widetilde{\mathfrak{m}^3}}{J\mathfrak{m}^2}\rightarrow \frac{\widetilde{\mathfrak{m}^3}}{\mathfrak{m}^3}\rightarrow0.$$  we get $\ell(\mathfrak{m}^3/{J\mathfrak{m}^2})=2$ and $\widetilde{\mathfrak{m}^3}=\mathfrak{m}^3$, so from(a) $\widetilde{\mathfrak{m}^j}=\mathfrak{m}^j$ for all $j\geq3$.

\end{enumerate}	
		
	\end{enumerate}
\end{proof}
Now we will consider general case that is dimension greater than two case.
\begin{thm} \label{B}
Let $(A,\mathfrak{m})$ be Cohen-Macaulay local ring of dimension $d\geq2$ with type$(A)=e-h$ and $e_2=e_1-e+1\neq0$. Suppose $x_1,\ldots,x_d$ is $A$-superficial sequence. Set $J=(x_1,\ldots,x_d)$. Now if depth $G(A)=0$ then
\begin{enumerate}[\rm (1)]

\item $\widetilde{\mathfrak{m}^2} \neq \mathfrak{m}^2$ and  $\ell({\widetilde{\mathfrak{m}^2}}/{\mathfrak{m}^2})=1$.
\item $\widetilde{G}(A)$ is Cohen-Macaulay.
\item  $\widetilde{\mathfrak{m}^{n+1}}=J\widetilde{\mathfrak{m}^n}$ for all $n\geq2$.
\item $\widetilde{\mathfrak{m}^j}=\mathfrak{m}^j$ for all $j\geq3$ and $\ell({\mathfrak{m}^3/{J\mathfrak{m}^2}})=d$.
\end{enumerate}
\end{thm}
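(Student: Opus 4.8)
The plan is to argue by induction on $d$, the base case $d=2$ being Theorem \ref{d=2}. So suppose $d\geq 3$ and that the statement holds in dimension $d-1$. Set $(B,\mathfrak{n})=(A/(x_1),\mathfrak{m}/(x_1))$ and $\overline{J}=(x_2,\ldots,x_d)B$, so $\dim B=d-1\geq 2$. First I would check that $B$ inherits all the hypotheses: since $x_1$ is $A$-regular we have $\type(B)=\type(A)=e-h$, the embedding codimension is unchanged, and by \ref{mod-sup} $e_i(B)=e_i(A)$ for $i\leq d-1$, so $e_2(B)=e_1(B)-e(B)+1\neq 0$. The key preliminary point is that $\depth G(B)=0$: by Sally descent ($\depth G(A)\geq 2$ iff $\depth G(B)\geq 1$) the hypothesis $\depth G(A)=0$ forces $\depth G(B)=0$. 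Hence the induction hypothesis applies to $B$ and yields its four conclusions; in particular $\ell(\widetilde{\mathfrak{n}^2}/\mathfrak{n}^2)=1$, $\widetilde{\mathfrak{n}^{n+1}}=\overline{J}\widetilde{\mathfrak{n}^n}$ for $n\geq 2$, $\widetilde{\mathfrak{n}^j}=\mathfrak{n}^j$ for $j\geq 3$, and $\ell(\mathfrak{n}^3/\overline{J}\mathfrak{n}^2)=d-1$.

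Next I would transfer this information through the maps $\eta_i\colon\widetilde{\mathfrak{m}^{i+1}}/J\widetilde{\mathfrak{m}^i}\to\widetilde{\mathfrak{n}^{i+1}}/\overline{J}\widetilde{\mathfrak{n}^i}$ of \ref{nn}. For $n\geq 3$ the equalities $\widetilde{\mathfrak{n}^n}=\mathfrak{n}^n$ give $\overline{\widetilde{\mathfrak{m}^n}}=\widetilde{\mathfrak{n}^n}$ for free, via the squeeze $\mathfrak{n}^n=\overline{\mathfrak{m}^n}\subseteq\overline{\widetilde{\mathfrak{m}^n}}\subseteq\widetilde{\mathfrak{n}^n}=\mathfrak{n}^n$. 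The genuine issue is degree $2$, that is, conclusion (1). I would first show $\widetilde{\mathfrak{m}^2}\neq\mathfrak{m}^2$ by contradiction: if $\widetilde{\mathfrak{m}^2}=\mathfrak{m}^2$, then $\overline{\widetilde{\mathfrak{m}^n}}\subseteq\widetilde{\mathfrak{n}^n}=\mathfrak{n}^n$ gives $\widetilde{\mathfrak{m}^n}\subseteq\mathfrak{m}^n+(x_1)$, and a degree-by-degree peeling (induction on $n$, using $(\widetilde{\mathfrak{m}^n}\colon x_1)=\widetilde{\mathfrak{m}^{n-1}}$: an element $w=m+cx_1\in\widetilde{\mathfrak{m}^n}$ has $c\in\widetilde{\mathfrak{m}^{n-1}}=\mathfrak{m}^{n-1}$, whence $cx_1\in\mathfrak{m}^n$) forces $\widetilde{\mathfrak{m}^n}=\mathfrak{m}^n$ for all $n$, contradicting $\depth G(A)=0$. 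The inclusion (*) of \ref{incl} together with $\ell(\widetilde{\mathfrak{n}^2}/\mathfrak{n}^2)=1$ then gives $\ell(\widetilde{\mathfrak{m}^2}/\mathfrak{m}^2)\leq 1$, hence $=1$. Now $\widetilde{\mathfrak{m}^2}\neq\mathfrak{m}^2$ forces $\overline{\widetilde{\mathfrak{m}^2}}=\widetilde{\mathfrak{n}^2}$, so $\overline{\widetilde{\mathfrak{m}^n}}=\widetilde{\mathfrak{n}^n}$ for every $n$; Proposition \ref{nn}(2) makes each $\eta_n$ bijective, and as the targets vanish for $n\geq 2$ we get $\widetilde{\mathfrak{m}^{n+1}}=J\widetilde{\mathfrak{m}^n}$ for all $n\geq 2$, which is conclusion (3). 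Combining this with the general identity $\widetilde{\mathfrak{m}^2}\cap J=J\mathfrak{m}$ yields $\widetilde{\mathfrak{m}^{n+1}}\cap J=J\widetilde{\mathfrak{m}^n}$ for all $n\geq 1$, so $\widetilde{G}(A)$ is \CM \ by \cite[Prop 3.5]{HM}, giving conclusion (2).

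Finally, for conclusion (4) I would feed the exact sequence of Lemma \ref{new} the value $\ell(\mathfrak{n}^3/\overline{J}\mathfrak{n}^2)=d-1$, obtaining $\ell(\mathfrak{m}^3/J\mathfrak{m}^2)=b_2+(d-1)$ with $b_2=\ell((\mathfrak{m}^3\colon x_1)/\mathfrak{m}^2)$. Since $(\mathfrak{m}^3\colon x_1)\subseteq(\widetilde{\mathfrak{m}^3}\colon x_1)=\widetilde{\mathfrak{m}^2}$ and $\ell(\widetilde{\mathfrak{m}^2}/\mathfrak{m}^2)=1$, we have $b_2\in\{0,1\}$; writing $\widetilde{\mathfrak{m}^2}=\mathfrak{m}^2+(a)$ and reducing $\widetilde{\mathfrak{m}^3}=J\widetilde{\mathfrak{m}^2}$ modulo $x_1$ shows $\widetilde{\mathfrak{m}^3}\subseteq\mathfrak{m}^3+(x_1a)$, so that $b_2=1$ is equivalent to $\widetilde{\mathfrak{m}^3}=\mathfrak{m}^3$. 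The main obstacle is exactly this last equality: it does not follow formally from the transfer, and it is the higher-dimensional analogue of the step in Theorem \ref{d=2} where the value $\ell(\mathfrak{m}^3/J\mathfrak{m}^2)=1$ was excluded by \cite[Cor 2.7]{H}. I would resolve it by choosing $x_1$ as in \ref{superf} and ruling out $b_2=0$ (equivalently $\ell(\mathfrak{m}^3/J\mathfrak{m}^2)=d-1$) through a depth criterion of \cite[Cor 2.7]{H} type, such a value forcing $\depth G(A)>0$, contrary to hypothesis. Once $b_2=1$, so $\widetilde{\mathfrak{m}^3}=\mathfrak{m}^3$ and $\ell(\mathfrak{m}^3/J\mathfrak{m}^2)=d$, conclusion (3) cascades: $\widetilde{\mathfrak{m}^{j+1}}=J\widetilde{\mathfrak{m}^j}=J\mathfrak{m}^j\subseteq\mathfrak{m}^{j+1}$ for $j\geq 3$ gives $\widetilde{\mathfrak{m}^j}=\mathfrak{m}^j$ for all $j\geq 3$, completing conclusion (4).
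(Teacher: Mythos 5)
Your handling of parts (1)--(3) is correct and essentially the paper's own argument: the same induction on $d$ with base case Theorem \ref{d=2}, Sally descent to get $\depth G(B)=0$, the embedding $\widetilde{\m^2}/\m^2 \hookrightarrow \widetilde{\n^2}/\n^2$, the squeeze $\n^j = \ov{\m^j} \sub \ov{\widetilde{\m^j}} \sub \widetilde{\n^j}=\n^j$ for $j\geq 3$, and Proposition \ref{nn} to descend $\widetilde{\n^{n+1}}=\ov{J}\widetilde{\n^n}$ to $\widetilde{\m^{n+1}}=J\widetilde{\m^n}$. Your route to (2), via the Valabrega--Valla condition $\widetilde{\m^{n+1}}\cap J=J\widetilde{\m^n}$ for all $n\geq 1$ and \cite[Prop 3.5]{HM}, is a harmless variant of the paper's (which instead notes that the Ratliff--Rush filtration behaves well modulo $x_1$, so $x_1^*$ is $\widetilde{G}(A)$-regular and $\widetilde{G}(A)/(x_1^*)=\widetilde{G}(B)$ is \CM). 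Your reduction of (4) to the single equality $\widetilde{\m^3}=\m^3$, equivalently $b_2=\ell((\m^3:x_1)/\m^2)=1$, via Lemma \ref{new} is also correct.

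However, the step you defer --- ruling out $b_2=0$ --- is a genuine gap, and it is exactly the hard core of the paper's proof; the mechanism you propose cannot close it. Criteria of the \cite[Cor 2.7]{H} / Huckaba--Marley type detect $\depth G(A)\geq d-1$ through the equality $e_1=\sum_{n\geq 0}\ell(\m^{n+1}/J\m^n)$. In dimension two, ``$\geq d-1$'' happens to mean ``positive,'' which is why the paper can invoke \cite[Cor 2.7]{H} in Theorem \ref{d=2}; for $d\geq 3$ this conclusion is far stronger than the positivity you need, and its hypothesis fails here in \emph{both} scenarios, so no contradiction can be extracted. Concretely, applying the base case and Lemma \ref{sigma} to a two-dimensional quotient of $A$ gives $e_1=2e-h-1$, while $\ell(\m/J)+\ell(\m^2/J\m)+\ell(\m^3/J\m^2)\geq (e-1)+(e-h-1)+(d-1)\geq 2e-h>e_1$ whether $\ell(\m^3/J\m^2)$ equals $d-1$ or $d$; so the Huckaba--Marley equality is violated either way, which is perfectly consistent with $\depth G(A)=0$ and yields nothing. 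The paper instead needs a new idea: assuming $\widetilde{\m^3}\neq\m^3$, the exact sequence forces $(\m^3:x)=\m^2$ for \emph{every} superficial $x$ (this uses the induction hypothesis for $A/(x)$, hence $d\geq 3$); since $\widetilde{G}(A)$ is \CM, Proposition \ref{GCM} makes $G(A)$ \GCM \ with every non-maximal associated prime minimal of dimension $d$, so one can choose $u,v$ with $u-\alpha v$ superficial for every unit $\alpha$ (as in \ref{superf}); writing $\widetilde{\m^2}=\m^2+(a)$, the classes of $ua$ and $va$ are nonzero in $\widetilde{\m^3}/\m^3\cong k$, hence proportional, so $ua-\alpha va\in\m^3$ for some unit $\alpha$, giving $a\in(\m^3:(u-\alpha v))=\m^2$, a contradiction. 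Without this two-superficial-element argument (or a genuine substitute), your proof of (4) does not go through.
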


\begin{proof}
We will prove this result by induction on dimension of $A$.\\ When $d=2$ then see [Theorem \ref{d=2}]. Now assume $d\geq3$ and result is true for dimension $d-1$.
Set $(B,\mathfrak{n})=(A/(x_1),\mathfrak{m}/(x_1))$ and $\bar{J}=(x_2,\ldots,x_d)B$. We note that by Sally descent we have $\depth G(B) = 0$.
\begin{enumerate}
\item We have an exact sequence from \cite[5.5]{Pu1}
\begin{equation*}
0\rightarrow \frac{(\mathfrak{m}^{i+1}:x_1)}{\mathfrak{m}^i}\rightarrow \frac{\widetilde{\mathfrak{m}^i}}{\mathfrak{m}^i}\rightarrow \frac{\widetilde{\mathfrak{m}^{i+1}}}{\mathfrak{m}^{i+1}}\rightarrow \frac{\widetilde{\mathfrak{n}^{i+1}}}{\mathfrak{n}^{i+1}}
\end{equation*}

Now since $\widetilde{\mathfrak{m}}=\mathfrak{m}$, So we have exact sequence

\begin{equation*}
0\rightarrow \frac{\widetilde{\mathfrak{m}^2}}{\mathfrak{m}^2} \rightarrow \frac{\widetilde{\mathfrak{n}^2}}{\mathfrak{n}^2}
\end{equation*}
and by induction assumption $\ell(\widetilde{\mathfrak{n}^2}/\mathfrak{n}^2)=1$ , So $\ell(\widetilde{\mathfrak{m}^2}/\mathfrak{m}^2)\leq1$.\\ Since $\widetilde{\mathfrak{n}^i}=\mathfrak{n}^i$  for all $i\geq3$. So we have exact sequance for all $i\geq2$

\begin{equation*}
\frac{\widetilde{\mathfrak{m}^i}}{\mathfrak{m}^i}\rightarrow \frac{\widetilde{\mathfrak{m}^{i+1}}}{\mathfrak{m}^{i+1}}\rightarrow0
\end{equation*}
Now if $\widetilde{\mathfrak{m}^2}=\mathfrak{m}^2$ then $\widetilde{\mathfrak{m}^j}=\mathfrak{m}^j$ for all $j\geq1$  which gives depth $G(A)>0$ a contradiction. So $\ell(\widetilde{\mathfrak{m}^2}/\mathfrak{m}^2)=1$.

\item Since $$ 0\rightarrow \frac{\widetilde{\mathfrak{m}^2}}{\mathfrak{m}^2} \rightarrow \frac{\widetilde{\mathfrak{n}^2}}{\mathfrak{n}^2} \quad \text{and} \quad \ell(\widetilde{\mathfrak{m}^2}/\mathfrak{m}^2)=\ell(\widetilde{\mathfrak{n}^2}/\mathfrak{n}^2)=1 $$ we have $\overline{\widetilde{\mathfrak{m}^2}}=\widetilde{\mathfrak{n}^2}$. Also from induction $\widetilde{\mathfrak{n}^i}=\mathfrak{n}^i$  for all $i\geq3$. So we have $$\mathfrak{n}^j \subseteq \overline{\widetilde{\mathfrak{m}^j}}\subseteq \widetilde{\mathfrak{n}^j}=\mathfrak{n}^j$$
So $\overline{\widetilde{\mathfrak{m}^j}}=\widetilde{\mathfrak{n}^j}$ for all $j\geq1$. Thus the Ratliff-Rush filtration on $A$ behaves well with respect to superficial element $x_1$. Now $\widetilde{G}(A)/(x_1^*)=\widetilde{G}(B)$ and $\widetilde{G}(B)$ is Cohen-Macaulay and $x_1^*$ is $\widetilde{G}(A)$-regular. So we get $\widetilde{G}(A)$ is Cohen-Macaulay.

\item Now since $\overline{\widetilde{\mathfrak{m}^j}}=\widetilde{\mathfrak{n}^j}$ for all $j\geq1$. So from [\ref{nn}], the map  $$\eta_i : \widetilde{\mathfrak{m}^{i+1}}/{J\widetilde{\mathfrak{m}^i}} \rightarrow \widetilde{\mathfrak{n}^{i+1}}/{\bar{J}\widetilde{\mathfrak{n}^i}} \quad \text{for all} \quad i\geq0 $$ is isomorphism. Now by induction assumption $\widetilde{\mathfrak{n}^{i+1}}=\bar{J}\widetilde{\mathfrak{n}^i}$ for all $i\geq2$,  so $\widetilde{\mathfrak{m}^{i+1}}=J\widetilde{\mathfrak{m}^i}$ for all $i\geq2$.
\item
We want to prove $\widetilde{\mathfrak{m}^i}=\mathfrak{m}^i$ for all $i\geq3$. Since we have proved that $\widetilde{\mathfrak{m}^{i+1}}=J\widetilde{\mathfrak{m}^i}$ for all $i\geq2$.  So to prove the result it is sufficient to prove $\widetilde{\mathfrak{m}^3}=\mathfrak{m}^3$.  Now if possible assume that $\widetilde{\mathfrak{m}^3}\neq\mathfrak{m}^3$. For  any $A$-superficial element $x$, we have exact sequence $$0\rightarrow \frac{(\mathfrak{m}^3:x)}{\mathfrak{m}^2}\rightarrow \frac{\widetilde{\mathfrak{m}^2}}{\mathfrak{m}^2}\rightarrow \frac{\widetilde{\mathfrak{m}^3}}{\mathfrak{m}^3}\rightarrow0 $$
So, $\ell(\widetilde{\mathfrak{m}^3}/{\mathfrak{m}^3})=1$ and $(\mathfrak{m}^3:x)=\mathfrak{m}^2$.\\
As $G(A)$ has depth zero and it is generalized Cohen-Macaulay by \ref{GCM}. It follows that \begin{equation*}
Ass (G(A)) = \{\M, P_1,\ldots,P_r\}
\end{equation*}
where $P_1,\ldots,P_r$ are minimal primes of $G(A)$ and $\M$ is the maximal homogeneous ideal of $G(A)$. Now by \ref{GCM}, dim $G(A)/P_i=d\geq3$ for all $i$. Let $V$ be $k$-vector space $\mathfrak{m}/\mathfrak{m}^2$. If $V\subset P_i$ then  $P_i=\M$, a contradiction.

 In particular, dim $_k(V\cap P_i)\leq $ dim $_k V-1$ for all $i.$\\
 {\bf Claim :}  dim $_k(V\cap P_i)\leq $ dim $_k V-2$ for all $i.$\\
 Now if possible assume that for some $i$, dim $_k(V\cap P_i)= $ dim $_k V-1$. Then there exists $s\in V$ such that $P_i\cap V +ks = V$. So we get $P_i+G(A)s=\M$. This gives dim $G(A)/P_i\leq1$ which is a contradiction. So we have proved the claim.

 As $k$ is infinite there exists $k$-linearly independent vectors $u^*,v^*$ in $V$ such that if $H=ku^*+kv^*$ then $H\cap P_i=0$ for all $i$. It follows that $u-\alpha v$ is an $A-$superficial element for any unit $\alpha$ (see\ref{superf}). Furthermore $u,v$ are $A-$ superficial.

Now since $\ell(\widetilde{\mathfrak{m}^2}/{\mathfrak{m}^2})=1$, we can write $ \widetilde{\mathfrak{m}^2}=\mathfrak{m}^2+(a)$ where $ a\notin \mathfrak{m}^2$. Let $x$ be any $A$-superficial element. If $xa\in \mathfrak{m}^3$ then $a\in (\mathfrak{m}^3:x)=\mathfrak{m}^2$ a contradiction. Thus $\overline{xa}$ is a non-zero element in  $\widetilde{\mathfrak{m}^3}/\mathfrak{m}^3$.
In particular $\overline{ua},\overline{va}$ are non zero elements of $\widetilde{\mathfrak{m}^3}/\mathfrak{m}^3$. As $\widetilde{\mathfrak{m}^3}/\mathfrak{m}^3 \cong k$ we get that there is a unit $\alpha\in A$ such that  $\overline{ua}=\alpha\overline{va}$. So $(ua-\alpha va)\in \mathfrak{m}^3$. By construction $u-\alpha v$ is an $A$-superficial element. So $a\in \mathfrak{m}^3:(u-\alpha v)=\mathfrak{m}^2$ a contradiction. \\ So $\widetilde{\mathfrak{m}^3}=\mathfrak{m}^3$ and $\ell({(\mathfrak{m}^3:x)}/{\mathfrak{m}^2})=1$.\\ Now from Lemma \ref{new} we have  short exact sequence $$0\rightarrow \frac{(\mathfrak{m}^3:x)}{\mathfrak{m}^2}\rightarrow \frac{\mathfrak{m}^3}{J\mathfrak{m}^2}\rightarrow \frac{\mathfrak{n}^3}{\bar{J}\mathfrak{n}^2}\rightarrow0 $$ And from induction assumption $\ell({\mathfrak{n}^3}/{\bar{J}\mathfrak{n}^2})=d-1$, so $\ell(\mathfrak{m}^3/{J\mathfrak{m}^2})=d$.
\end{enumerate}
\end{proof}

\subsection{Hilbert Series}

Let $(A,\mathfrak{m})$ be Cohen-Macaulay local ring with maximal ideal $\mathfrak{m}$ and dimension $d\geq2$. Suppose $x_1,\ldots,x_d$ is $A$-superficial sequence.\\ Set $J=(x_1,\ldots,x_d)$ and $(B,\mathfrak{n})=(A/{(x_1,\ldots,x_{d-2})},\mathfrak{m}/(x_1,\ldots,x_{d-2}))$.

 The associated graded ring with respect to Ratliff-Rush filtration is $$\widetilde{G}(A)=\bigoplus_{n\geq0}{\frac{\widetilde{\mathfrak{m}^n}}{\widetilde{\mathfrak{m}^{n+1}}}}$$
Its Hilbert series $$\sum_{n\geq0}\ell({\widetilde{\mathfrak{m}^n}}/{\widetilde{\mathfrak{m}^{n+1}}})z^n=\frac{\tilde{h}_A(z)}{(1-z)^d}$$

Now we can write $h_A(z)=\tilde{h}_A(z)+(1-z)^{d+1}r(z)$ where $h_A(z)$ is the $h$ polynomial of $A$ and $r(z)=\sum_{n\geq0}\ell(\widetilde{\mathfrak{m}^{n+1}}/{\mathfrak{m}^{n+1}})z^n\in \mathbb{Z}[z]$.\\ If the dimension is two then from\cite[Theorem 3]{It} $$\tilde{h}_{A}(z)=1+(\sigma_0-\sigma_1)z+\ldots+(\sigma_{s-1}-\sigma{s})z^{s}.$$ where $\sigma_n=\ell(\widetilde{\mathfrak{m}^{n+1}}/{J\widetilde{\mathfrak{m}^n}})$.

\begin{thm}
If in addition with above setup, the ring $A$ satisfies $e_2=e_1-e+1\neq0$ , type$(A)=e-h$ and depth $G(A)=0$ then $h_A(z)=1+(h-1)z+(e-h)z^2+z(1-z)^{d+1}$.\\
In general, if depth $G(A)=n\neq d$ then $n\leq d-2$ and $h_A(z)=1+(h-1)z+(e-h)z^2+z(1-z)^{d-n+1}$, and if $G(A)$ is Cohen-Macaulay then $h_A(z)=1+hz+(e-h-1)z^2$.
\end{thm}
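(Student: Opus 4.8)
The plan is to prove the three assertions in turn, doing all the real work in the depth-zero case and then reducing the other two to it.

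For depth zero I would start from the identity $h_A(z)=\widetilde{h}_A(z)+(1-z)^{d+1}r(z)$ recorded just before the theorem, where $r(z)=\sum_{n\geq0}\ell(\widetilde{\mathfrak{m}^{n+1}}/\mathfrak{m}^{n+1})z^n$, and compute each factor from Theorem \ref{B}. The series $r(z)$ is immediate: $\ell(\widetilde{\mathfrak{m}}/\mathfrak{m})=0$, Theorem \ref{B}(1) gives $\ell(\widetilde{\mathfrak{m}^2}/\mathfrak{m}^2)=1$, and Theorem \ref{B}(4) gives $\widetilde{\mathfrak{m}^j}=\mathfrak{m}^j$ for $j\geq3$, so $r(z)=z$ and $(1-z)^{d+1}r(z)=z(1-z)^{d+1}$.

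The substance is $\widetilde{h}_A(z)$. By Theorem \ref{B}(2) the ring $\widetilde{G}(A)$ is Cohen-Macaulay, and since $J$ is a minimal reduction of $\mathfrak{m}$ the relation $\widetilde{\mathfrak{m}^{n+1}}=J\widetilde{\mathfrak{m}^n}$ (Theorem \ref{B}(3)) makes $\widetilde{G}(A)/(x_1^*,\dots,x_d^*)$ Artinian; thus $x_1^*,\dots,x_d^*$ is a homogeneous system of parameters and, by Cohen-Macaulayness, a regular sequence, so $\widetilde{h}_A(z)$ is the Hilbert series of the Artinian reduction
\[
\overline{G}=\widetilde{G}(A)/(x_1^*,\dots,x_d^*),\qquad \overline{G}_n=\widetilde{\mathfrak{m}^n}/\bigl(J\widetilde{\mathfrak{m}^{n-1}}+\widetilde{\mathfrak{m}^{n+1}}\bigr).
\]
Theorem \ref{B}(3) gives $\overline{G}_n=0$ for $n\geq3$, so $\widetilde{h}_A(z)$ has degree $\leq2$; in degree $0$ the length is $1$. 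The degree-$1$ term is where the hypotheses enter: $\overline{G}_1=\mathfrak{m}/(J+\widetilde{\mathfrak{m}^2})$, and writing $\widetilde{\mathfrak{m}^2}=\mathfrak{m}^2+(a)$ with $a\notin\mathfrak{m}^2$ I would show $a\notin J+\mathfrak{m}^2$: if $a=j+m$ with $j\in J$ and $m\in\mathfrak{m}^2$, then $j=a-m\in J\cap\widetilde{\mathfrak{m}^2}=J\mathfrak{m}\subseteq\mathfrak{m}^2$ (using the equality $\widetilde{\mathfrak{m}^2}\cap J=J\mathfrak{m}$ from Section~2), forcing $a\in\mathfrak{m}^2$, a contradiction. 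Since $\ell(\mathfrak{m}/(J+\mathfrak{m}^2))=h$, adjoining the independent class of $a$ gives $\ell(\overline{G}_1)=h-1$. Finally $\widetilde{h}_A(1)=e$ (the two filtrations agree for $n\gg0$, so $\widetilde{G}(A)$ has multiplicity $e$), which forces the degree-$2$ coefficient to be $e-(h-1)-1=e-h$. Hence $\widetilde{h}_A(z)=1+(h-1)z+(e-h)z^2$, and adding $z(1-z)^{d+1}$ gives the claimed formula.

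For $\depth G(A)=n\neq d$ I would first rule out $n=d-1$, then reduce. Ruling out $n=d-1$: pass to $A''=A/(x_1,\dots,x_{d-2})$, which is a two-dimensional Cohen-Macaulay ring with $e_2=e_1-e+1\neq0$ (these coefficients survive since $\dim\geq2$); if $n=d-1$ then $x_1^*,\dots,x_{d-2}^*$ is $G(A)$-regular and $\depth G(A'')=n-(d-2)=1$, contradicting Lemma \ref{dep02}. Thus $n\leq d-2$. Now set $A'=A/(x_1,\dots,x_n)$: by the depth criterion of Section~2 the sequence $x_1^*,\dots,x_n^*$ is $G(A)$-regular and $G(A')=G(A)/(x_1^*,\dots,x_n^*)$, whence $h_{A'}(z)=h_A(z)$; moreover $A'$ is Cohen-Macaulay of dimension $d-n\geq2$ with the same $e,e_1,e_2,h,\type$, and by Sally descent $\depth G(A')=0$. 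Applying the depth-zero case (already proved) to $A'$ gives $h_A(z)=h_{A'}(z)=1+(h-1)z+(e-h)z^2+z(1-z)^{d-n+1}$. The Cohen-Macaulay case is the analogous full reduction: $x_1^*,\dots,x_d^*$ is $G(A)$-regular, so $h_A(z)=h_{A/J}(z)=\sum_i\ell(\mathfrak{q}^i/\mathfrak{q}^{i+1})z^i$ for the Artinian ring $(A/J,\mathfrak{q})$; here $\mathfrak{q}^3=0$ by \cite[Prop.\ 6.21]{V}, $\ell(\mathfrak{q}/\mathfrak{q}^2)=h$, and $\ell(A/J)=e$, giving $\ell(\mathfrak{q}^2)=e-h-1$ and $h_A(z)=1+hz+(e-h-1)z^2$.

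I expect the main obstacle to be the degree-$1$ coefficient of $\widetilde{h}_A(z)$: once Theorem \ref{B} is available everything else is formal, but the value $h-1$ rather than $h$ rests precisely on showing that the extra Ratliff-Rush generator $a$ does not already lie in $J+\mathfrak{m}^2$, which is exactly where $\widetilde{\mathfrak{m}^2}\cap J=J\mathfrak{m}$ is used. A secondary point needing care is the justification that $x_1^*,\dots,x_d^*$ is a regular sequence on $\widetilde{G}(A)$, so that the Artinian reduction legitimately computes $\widetilde{h}_A(z)$.
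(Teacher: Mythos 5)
Your proposal is correct, and its two reduction steps (ruling out $n=d-1$ via Lemma \ref{dep02}, passing to $A'=A/(x_1,\ldots,x_n)$ by Sally descent, and the Artinian count when $G(A)$ is Cohen--Macaulay) coincide with the paper's argument, merely filling in details the paper leaves as parenthetical remarks. The core depth-zero computation, however, takes a genuinely different route. The paper never quotients $\widetilde{G}(A)$ by a full homogeneous system of parameters: it first proves the formula in dimension two using Itoh's identity (Lemma \ref{sigma}), writing $\tilde{h}_A(z)=1+(\sigma_0-\sigma_1)z+\sigma_1z^2$ and evaluating $\sigma_0=\ell(\mathfrak{m}/J)=e-1$ and $\sigma_1=\ell(\widetilde{\mathfrak{m}^2}/J\mathfrak{m})=e-h$, and then handles $d\geq3$ by descending to the two-dimensional quotient $B=A/(x_1,\ldots,x_{d-2})$, using the fact from the proof of Theorem \ref{B} that the Ratliff--Rush filtration behaves well modulo superficial elements, so that $\tilde{h}_A(z)=\tilde{h}_B(z)$. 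You instead argue uniformly in all dimensions $d\geq 2$: Theorem \ref{B}(2),(3) make $x_1^*,\ldots,x_d^*$ a homogeneous system of parameters, hence a regular sequence, on the Cohen--Macaulay ring $\widetilde{G}(A)$, and you read off the Hilbert series of the Artinian reduction degree by degree, getting the coefficient $h-1$ from the Section~2 fact $J\cap\widetilde{\mathfrak{m}^2}=J\mathfrak{m}$ and the coefficient $e-h$ from $\tilde{h}_A(1)=e$. What your route buys is independence from Itoh's $\sigma$-formula (which the paper only has in dimension two) and from the induction on dimension, while making transparent exactly how the extra Ratliff--Rush generator $a$ lowers the linear coefficient from $h$ to $h-1$; what the paper's route buys is brevity, since Lemma \ref{sigma} and the good behaviour of the filtration are already established en route to Theorem \ref{B}. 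Both arguments rest entirely on Theorem \ref{B}, so the logical dependencies are the same.
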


\begin{proof}
We are using above notation, to prove the theorem we will first assume depth $G(A)=0$.\\
Now if $d=2$ then since $e_2=e_1-e+1$ we have $\sigma_n=0$ for all $n\geq2$.\\
So $$\tilde{h}_A(z)=1+(\sigma_0-\sigma_1)z+\sigma_1z^2$$ Now as depth $G(A)=0$ so $$\sigma_0=\ell(\mathfrak{m}/J)=e-1 \quad \text{and} \quad \sigma_1=\ell(\widetilde{\mathfrak{m}^2}/{J\mathfrak{m}})=e-h=type(A)$$ So $$\tilde{h}_A(z)=1+(h-1)z+(e-h)z^2$$

Now if $d\geq3$ then since $\tilde{G}(A)$ is Cohen-Macaulay,

In this case Ratliff-Rush filtration behaves well with respect to superficial element and $\widetilde{G}(A)/(x_1^*,\ldots,x_{d-2}^*)=\widetilde{G}(B)$ see Theorem \ref{B}, so $\tilde{h}_A(z)=\tilde{h}_B(z)$.\\ Also since $$\ell(\widetilde{\mathfrak{m}^2}/\mathfrak{m}^2)=1 \quad \text{and} \quad \widetilde{\mathfrak{m}^i}=\mathfrak{m}^i \quad \text{for all} \quad i\geq3$$

this gives $r(z)=z$. So, $h$ polynomial of $A$ is $$h_A(z)=1+(h-1)z+(e-h)z^2+z(1-z)^{d+1}$$

Now suppose depth $G(A)=n>0$.

We can assume that $d\geq3$ and $d-n\geq2$, (because if $d-n=0$ then $G(A)$ is Cohen-Macaulay). Also note $d-n=1$ is not possible because in dimension two  $\depth G(A)$ is zero or two see [\ref{dep02}].)

Suppose $\alpha_1,\ldots,\alpha_n$ is a super-regular sequence in $A$, then $R=A/(\alpha_1,\ldots,\alpha_n)$ is Cohen-Macaulay local ring of dimension $d-n$ and depth of its associated graded ring with respet to the maximal ideal is zero.

So, $h_R(A)=1+(h-1)z+(e-h)z^2+z(1-z)^{d-n+1}$

Now since $\alpha_1,\ldots,\alpha_n$ is a super-regular sequence, so $$h_A(z)=h_R(z)=1+(h-1)z+(e-h)z^2+z(1-z)^{d-n+1}$$
\end{proof}

So from the above theorem we can calculate Hilbert series $H_A(z)$, since $$H_A(z)=\frac{h_A(z)}{(1-z)^d}.$$

Now  we will conclude the section by  proving a result which gives a  bound for $d$, above which depth of $G(A)$ cannot be zero.

\begin{thm}
	Let $(A,\mathfrak{m})$ be Cohen-Macaulay local ring with dimension $d$. Suppose $e_2=e_1-e+1\neq0$, type$(A)=e-h=k$. Now if $d>(k-1)^{<2>}$ then depth of $G(A)$ is non-zero.
\end{thm}
	\begin{proof}
	Let $x_1,\ldots,x_d$ be an $A$-superficial sequence. Set $J=(x_1,\ldots,x_d)$.

	Assume depth $G(A)=0$, Now we will show that $d\leq (k-1)^{<2>}$

	 Now consider $G'= G(A)/(x_1^*,\ldots,x_d^*)$. Clearly, $G'$ is finite dimensional $k$-algebra.
	
	  Since $\widetilde{\mathfrak{m}^i}=\mathfrak{m}^i$ for all $i\geq3$ and $\widetilde{\mathfrak{m}^{i+1}}=J\widetilde{\mathfrak{m}^i}$ for all $i\geq2$.
	
	  So $$G'=A/\mathfrak{m}\oplus\mathfrak{m}/(J+\mathfrak{m}^2)\oplus\mathfrak{m}^2/{J\mathfrak{m}}\oplus \mathfrak{m}^3/{J\mathfrak{m}^2}$$
	   Now Hilbert function $H(G',2)=\ell(\mathfrak{m}^2/{J\mathfrak{m}})=e-h-1=k-1$
	
	   and $H(G',3)=\ell(\mathfrak{m}^3/{J\mathfrak{m}^2})=d$ from[Theorem \ref{B}]
	
	   Now from Macaulay theorem\cite[Theorem4.2.10]{BH}
	    $$H(G',3)\leq H(G',2)^{<2>}$$
	    this gives $$d\leq(k-1)^{<2>}.$$
	\end{proof}

\section{Examples}
In this section we discuss a few examples which illustrates our results.
\begin{example}
	(Wang's example) see \cite[3.10]{CPR}
	$$A = k[[ x, y, z, u, v]]/(z^2, zu, zv, uv, yz - u^3,  xz-v^3).$$ $A$ is \CM \ local ring of dimension 2 and depth $G(A)=0$. We have $h_A(z)=1+3z+3z^3-z^4$, $e_2=e_1-e+1=3$ and type$(A)=e-h=3.$
\end{example}
\begin{example}(Extension of Wang's example )(For these calculations we have used COCOA \cite{CO})
	\begin{enumerate}
		\item  $A=k[[x,y,t,z,u,v,w]]/(z^2,zu,zv,uv,zw,uw,vw,yz-u^3,xz-v^3,tz-w^3)$
		$A$ is Cohen-Macaulay local ring of dimension 3 and depth $G(A)=0.$ We have $h_A(z)=1+4z+6z^3-4z^4+z^5$, $e_2=e_1-e+1=4$ and type$(A)=e-h=4$
		\item  $A=k[[x,y,t,p,z,u,v,w,q]]/(z^2,zu,zv,zw,zq,uv,uw,uq,vw,vq,wq,xz-u^3,yz-v^3,tz-w^3,pz-q^3)$
		
		$A$ is \CM \ local ring of dimension 4 and depth $G(A)=0$. We have $h_A(z)=1+5z+10z^3-10z^4+5z^5-z^6$, $e_2=e_1-e+1=5$ and type$(A)=e-h=5.$
	\end{enumerate}
\end{example}
\begin{example}
	Let $(A,\mathfrak{m})$ be Gorenstein local ring with $e=h+2$, then clearly $e_2=e_1-e+1$ and type$(A)=e-h-1=1$, see \cite{S}.
\end{example}

\end{document}